\tikzset{help lines/.style={step=#1cm,very thin, color=gray},
help lines/.default=.5} 
\tikzset{thick grid/.style={step=#1cm,thick, color=gray},
thick grid/.default=1} 
\numberwithin{figure}{section}
\numberwithin{table}{section}
\theoremstyle{definition}
\theoremstyle{plain}
\newcommand{\thistheoremname}{}
\newtheorem*{genericthm*}{\thistheoremname}
\newenvironment{namedthm*}[1]
  {\renewcommand{\thistheoremname}{#1}%
   \begin{genericthm*}}
  {\end{genericthm*}}
\NewDocumentCommand{\sump}{e{_}}
 {%
  \DOTSB
  \mathop{\IfNoValueTF{#1}{\sump@{}}{\sump@{#1}}}%
  \nolimits
 }
\newcommand{\sump@}[1]{\mathpalette\sump@@{#1}}
\newcommand{\sump@@}[2]{%
  \ifx#1\displaystyle
    {\sump@display{#2}}%
  \else
    \sum@\nolimits'_{#2}%
  \fi
}
\newcommand{\sump@display}[1]{%
  \sbox\z@{$\m@th\displaystyle\sum@\nolimits'$}%
  \sbox\tw@{$\m@th\displaystyle\sum@\limits_{#1}$}%
  \sbox\@tempboxa{$\m@th\displaystyle'$}
  \mathop{\sum@\nolimits' \kern-\wd\@tempboxa}\limits_{#1}%
  \ifdim\wd\z@>\wd\tw@
    \kern\dimexpr\wd\z@-\wd\tw@\relax
  \fi
}
\newcommand{\ZZ}{\mathbb{Z}}
\newcommand{\lrabs}[1]{\left\lvert #1 \right\lvert}
\newcommand{\lrp}[1]{\left(#1\right)}
\newcommand{\lrb}[1]{\left[#1\right]}
\newcommand{\lrfloor}[1]{\left\lfloor #1 \right\rfloor}
 \newtheorem{theorem}{Theorem}[section]
 \newtheorem{lemma}[theorem]{Lemma}
 \newtheorem{proposition}[theorem]{Proposition}
 \theoremstyle{definition}
 \theoremstyle{definition}
 \theoremstyle{remark}
\newtheorem{conjecture}[theorem]{\bf Conjecture}
 \numberwithin{equation}{section}
\newcommand{\ben}{\begin{equation}}
\newcommand{\een}{\end{equation}}
\DeclareMathOperator{\SL}{SL}
\DeclareMathOperator{\Egv}{Egv}
\DeclareMathOperator{\Tr}{Tr}
\setlist[enumerate]{leftmargin=*,widest=0}
\setlist[itemize]{leftmargin=*,widest=0}
\def\subsection{\@startsection{subsection}{2}%
  \z@{.5\linespacing\@plus.7\linespacing}{.3\linespacing}%
  {\normalfont\bfseries}}
\def\subsubsection{\@startsection{subsubsection}{3}%
  \z@{.5\linespacing\@plus.7\linespacing}{.3\linespacing}%
  {\normalfont\bfseries}}
\begin{document}
\title{Non-repetition of second coefficients of Hecke polynomials}

\subjclass[2020]{Primary 11F25; Secondary 11F72 and 11F11.}
\keywords{Hecke operator; Hecke polynomial; Eichler-Selberg trace formula}

\author{Archer Clayton}
\address{Department of Mathematics, Brigham Young University, Provo, UT 84602}
\email{ac727@byu.edu}

\author{Helen Dai}
\address{Department of Mathematics, Harvard University, 1 Oxford St, Cambridge, MA 02138}
\email{hdai@college.harvard.edu}

\author{Tianyu Ni}
\address{School of Mathematical and Statistical Sciences\\
Clemson University\\
Clemson, SC 29634-0975}
\email{tianyuni1994math@gmail.com}

\author{Erick Ross}
\address{School of Mathematical and Statistical Sciences\\
Clemson University\\
Clemson, SC 29634-0975}
\email{erickr@clemson.edu}

\author{Hui Xue}
\address{School of Mathematical and Statistical Sciences\\
Clemson University\\
Clemson, SC 29634-0975}
\email{huixue@clemson.edu}

\author{Jake Zummo}
\address{Department of Mathematics, The University of Chicago, Chicago, IL 60637}
\email{jzummo25@uchicago.edu}

\begin{abstract} 
    Let $T_m(N,2k)$ denote the $m$-th Hecke operator on the space $S_{2k}(\Gamma_0(N))$ of cuspidal modular forms of weight $2k$ and level $N$. 
    In this paper, we study the non-repetition of the second coefficient of the characteristic polynomial of  $T_m(N,2k)$. 
    We obtain results in the horizontal aspect (where $m$ varies), the vertical aspect (where $k$ varies), and the level aspect (where $N$ varies).
    Finally, we use these non-repetition results to extend a result of Vilardi and Xue on distinguishing Hecke eigenforms. 
\end{abstract}

\maketitle

\section{Introduction}

Let $S_{2k}(\Gamma_0(N))$ \cite[Section 3.1]{serrehecke} denote the space of cusp forms of weight $2k$ and level $N$, with $s(N, 2k)=\dim S_{2k}(\Gamma_0(N))$. 
For $q=e^{2\pi iz},$ let $f(z)=\sum_{m=1}^{\infty}c_f(m) q^m$ be the Fourier expansion of $f\in S_{2k}(\Gamma_0(N))$. For $m \geq 1$, the $m$-th Hecke operator $T_m(N, 2k)$ \cite[Proposition 10.2.5]{cohen-stromberg} acts on $f$ by 
\begin{align}
T_m(N, 2k) f(z)=\sum_{n=1}^{\infty}\left(\sum_{\substack{d\mid (m,n)\\ (d,N)=1}}d^{2k-1} c_f(mn/d^2)\right)q^n.
\end{align}

We are interested in studying various Hecke polynomials: the characteristic polynomials of the Hecke operators $T_m(N, 2k)$. We can express the Hecke polynomial associated to $T_m(N, 2k)$ as
$$T_m(N,2k)(x)=\sum_{n=0}^{s(N,2k)} (-1)^n a_n(T_m(N, 2k)) x^{s(N,2k)-n}.$$
Here, we will refer to $a_n(T_m(N, 2k))$ as the $n$-th coefficient of $T_m(N, 2k)(x)$. Note that for any given $n$, $a_n(T_m(N, 2k))$ is only defined when $s(N, 2k) \geq n$. However, this is not a restrictive condition; $s(N, 2k) < n$ for only finitely many pairs $(N,k)$ \cite[Theorem 1.1]{ross}. Now, observe that using this notation, $a_1(T_m(N, 2k))$ is just $\Tr T_m(N, 2k)$, which is a subject of great interest in the theory of modular forms. For example, the ``generalized Lehmer conjecture" from \cite[Conjecture 1.5]{rouse} predicts that  $\Tr T_m(N, 2k)$ never vanishes for $m$ coprime to $N$ and $2k \geq 12$, $2k \neq 14$. Here, the classical Lehmer conjecture comes from taking $N=1$, $2k=12$.
Another natural question to ask is whether the coefficients $a_n$ repeat values, and if so, which values are repeated. Recently, it was shown in \cite{chirjorznorepeat} that $a_1(T_2(1, 2k)) = \Tr T_2(1, 2k)$ takes no repeated values whenever $s(1, 2k) \geq 1$. Part of the motivation of \cite{chirjorznorepeat} comes from the work of \cite{Vilardi2018} and \cite{xue-zhu}, which showed that Hecke eigenforms of level one can be distinguished by their second or third Fourier coefficients, provided that Maeda's conjecture holds.  An important step in \cite{Vilardi2018} and \cite{xue-zhu} is to show that  $\Tr T_2(1,2k) \ne \Tr T_2(1,2\ell)$ and $\Tr T_3(1,2k)\ne \Tr T_3(1,2\ell)$ for $k\ne \ell$ and $s(1,2k)=s(1,2\ell)\ge1$. 

In this paper, we investigate corresponding questions on the second coefficient $a_2$ of Hecke polynomials. Since $T_m(N,2k)$ takes three parameters, we consider three aspects of these questions, by fixing two of the three parameters. The first question concerns the vertical non-repetition of values of $a_2$ (non-repetition in the weight aspect). We show that for any fixed odd level $N$, $a_2(T_2(N, 2k))$ and $a_2(T_4(N, 2k))$ do not repeat for different values of $k$. 
The second question concerns the horizontal non-repetition of $a_2$, where we restrict to $N = 1$ and fix $k$, while considering varying $m$. The third question concerns the level non-repetition of $a_2$: 
we consider
the case when $m=2$ and $k$ are fixed and consider varying prime levels. 
Lastly, we apply these non-repetition results to the problem of distinguishing Hecke eigenforms. In the main result of \cite{Vilardi2018}, Vilardi and Xue showed that, assuming Maeda's conjecture, Hecke eigenforms of level one are determined by their $2$nd Fourier coefficient. We use the non-repetition results in this paper to extend their result to the $4$-th Fourier coefficient, and discuss how the same methods can apply to the $m$-th Fourier coefficient for any given $m \ge 2$. 

We now state the results precisely. In Section \ref{sect:T2nonrepetition}, we show that $a_2(T_2(N, 2k))$ takes no repeated values in the vertical aspect.
\begin{theorem}\label{theorem:main}
    Let $N \ge 1$ odd be fixed, and consider $k \geq 1$ such that $s(N,2k) \ge 2$. Then 
    $a_2(T_2(N, 2k))$
    takes no repeated values as $k$ varies.
    In fact, $a_2(T_2(N, 2k))$ is a strictly decreasing function of $k$.
\end{theorem}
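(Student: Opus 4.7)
The plan is to reduce the question to the Eichler-Selberg trace formula for $T_1$, $T_2$, and $T_4$. Since $N$ is odd, $(2,N)=1$, so the Hecke algebra relation
\begin{equation*}
    T_2(N,2k)^2 = T_4(N,2k) + 2^{2k-1}\,T_1(N,2k)
\end{equation*}
holds on $S_{2k}(\Gamma_0(N))$. Combined with Newton's identity $a_2(T)=\tfrac{1}{2}\bigl((\Tr T)^2-\Tr T^2\bigr)$, this yields
\begin{equation*}
    a_2(T_2(N,2k)) = \frac{1}{2}\bigl((\Tr T_2(N,2k))^2 - \Tr T_4(N,2k) - 2^{2k-1}\,s(N,2k)\bigr).
\end{equation*}
The problem therefore reduces to understanding how the right-hand side varies with $k$, which I would approach by substituting the Eichler-Selberg trace formula for each of the three traces.

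The key structural observation is that only perfect-square values of $m$ produce the large identity and parabolic contributions of size $\Theta(k \cdot m^{k-1})$ in $\Tr T_m(N,2k)$. Consequently $\Tr T_2(N,2k)=O_N(2^k)$ with no factor of $k$, while $\Tr T_4(N,2k)$ and $s(N,2k)=\Tr T_1(N,2k)$ each carry large principal-plus-parabolic contributions of respective sizes $\Theta(k\cdot 4^{k-1})$ and $\Theta(k)$. Substituting into the displayed expression, the quantity $-\Tr T_4(N,2k)-2^{2k-1}s(N,2k)$ produces a ``main term'' of the form $-c_N\cdot k\cdot 4^{k-1}$ with an explicit constant $c_N>0$ (involving the index $[\SL_2(\ZZ):\Gamma_0(N)]$), which is strictly decreasing in $k$ with first difference of order $k\cdot 4^{k-1}$. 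The remaining elliptic and hyperbolic contributions to $\Tr T_4$ and $s(N,2k)$, together with the entire term $(\Tr T_2(N,2k))^2$, are $O_N(4^k)$ without any factor of $k$: indeed, for $m\in\{2,4\}$ the elliptic sums involve $P_j(t,m)$ with $|t|<2\sqrt{m}$, for which $|P_j(t,m)|=O(m^{j-1})$ uniformly in $j$ since the relevant angle $\theta_t$ is bounded away from $0$ and $\pi$. Hence these error contributions cannot overturn the monotonicity of the main term for sufficiently large $k$.

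The main obstacle is making the above error bounds effective down to the smallest weight $k_0(N)$ at which $s(N,2k)\ge 2$ first holds. Asymptotically the main-term-dominates-error argument immediately yields strict decrease, but near $k_0(N)$ the elliptic and hyperbolic contributions can be numerically comparable to the leading term. To close the gap, I would either sharpen the quantitative bounds by tracking each Hurwitz class number and level factor explicitly in the Eichler-Selberg formula, or else verify the finitely many border weights $k$ by direct computation from the trace formula. The oddness of $N$ enters only through the Hecke relation above and the usual level-factor simplifications for $(m,N)=1$, so the same scheme should go through uniformly in $N$ once the error constants are made explicit.
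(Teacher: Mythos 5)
Your proposal follows essentially the same route as the paper: the identical reduction $a_2(T_2)=\tfrac12\bigl((\Tr T_2)^2-\Tr T_4-2^{2k-1}s(N,2k)\bigr)$ via the Hecke composition relation, the same identification of the main term $-c_N\,k\,4^{k-1}$ coming from the $A_1$ (and parabolic) contributions to $\Tr T_4$ and $s(N,2k)$ versus $O_N(4^k)$ elliptic/hyperbolic errors bounded through $|P_{2k}(t,m)|\le 2m^{k-1/2}/\sqrt{|t^2-4m|}$, and the same closing step of making the constants explicit in $N$ and checking the finitely many remaining $(N,k)$ by computer. The paper carries this out by bounding the first difference $a_2(T_2(N,2k+2))-a_2(T_2(N,2k))$ by $4^{k-1}\psi(N)\bigl[\tfrac{-6k-5}{8}+E(N)\bigr]$ with $E(N)\to 0$, exactly as your plan envisions.
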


Next, in Section \ref{sect:T4nonrepetition}, we show that $a_2(T_4(N,2k))$ takes no repeated values either. The second coefficient $a_2(T_m(N,2k))$ has slightly different behavior depending on whether $m$ is a perfect square or not. So we also include this case of $m=4$ to highlight this distinction.
\begin{theorem}\label{theorem:main4}
    Let $N \ge 1$ odd be fixed, and consider $k \geq 1$ such that $s(N,2k) \ge 2$. Then 
    $a_2(T_4(N, 2k))$
    takes no repeated values as $k$ varies.
\end{theorem}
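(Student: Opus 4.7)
The plan is to parallel the approach of Theorem~\ref{theorem:main} but adapt it to the case when the index is a perfect square, which changes the dominant behaviour of $a_2(T_m(N,2k))$. First, Newton's identity gives
\[
a_2(T_4(N,2k)) = \tfrac{1}{2}\bigl[(\Tr T_4(N,2k))^2 - \Tr(T_4(N,2k)^2)\bigr].
\]
Since $N$ is odd, the Hecke multiplication relation yields $T_4^2 = T_{16} + 2^{2k-1}T_4 + 4^{2k-1}\Id$, so taking traces produces
\[
2\,a_2(T_4(N,2k)) = (\Tr T_4(N,2k))^2 - \Tr T_{16}(N,2k) - 2^{2k-1}\Tr T_4(N,2k) - 4^{2k-1}\,s(N,2k).
\]
The question thus reduces to understanding $\Tr T_4(N,2k)$, $\Tr T_{16}(N,2k)$, and $s(N,2k)$ as functions of $k$.

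Next, I would apply the Eichler--Selberg trace formula to $\Tr T_4$ and $\Tr T_{16}$. Because $4$ and $16$ are perfect squares, each trace carries a main (identity) contribution of size $\asymp k\,\psi(N)\,m^{k-1}$ for $m=4,16$, where $\psi(N)=[\SL_2(\ZZ):\Gamma_0(N)]$; the remaining elliptic and hyperbolic pieces give explicit error terms involving Hurwitz class numbers and divisor sums. Substituting these expansions, together with $s(N,2k)\asymp k\psi(N)$, I would then track the asymptotic size of each of the four terms above. The squared main term $(\Tr T_4)^2$ contributes $\asymp k^2\psi(N)^2\,2^{4k-4}$, whereas $\Tr T_{16}$, $2^{2k-1}\Tr T_4$, and $4^{2k-1}s(N,2k)$ each contribute only $\asymp k\,\psi(N)\,2^{4k-4}$. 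Consequently, $(\Tr T_4)^2$ dominates, and $a_2(T_4(N,2k))$ behaves like a positive multiple of $k^2\,2^{4k-4}$ for large $k$. This is the essential qualitative departure from Theorem~\ref{theorem:main}, where the leading contribution was $2^{2k-1}s(N,2k)$, only linear in $k$.

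With this asymptotic in hand, the aim is to prove strict monotonicity, and thereby non-repetition, by making the error estimates quantitative enough to guarantee $a_2(T_4(N,2(k+1))) > a_2(T_4(N,2k))$ for all admissible pairs $(N,k)$. I would extract an explicit threshold $k_0(N)$ beyond which the quadratic-in-$k$ main term beats every error contribution, then reduce to checking a finite list of small $(N,k)$ with $s(N,2k)\ge 2$ by direct computation, after first bounding $N$ in terms of $k$ using standard dimension estimates.

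The main obstacle I anticipate is controlling the cross-term that arises when squaring $\Tr T_4$. Writing $\Tr T_4 = M_4 + E_4$ with $M_4$ the identity term, the expansion $(\Tr T_4)^2 = M_4^2 + 2M_4 E_4 + E_4^2$ introduces $2M_4 E_4$, which is of size $\sim k\,\psi(N)\,2^{2k-2}\cdot E_4$ and is therefore comparable in magnitude to $\Tr T_{16}$, $2^{2k-1}\Tr T_4$ and $4^{2k-1}s(N,2k)$ unless $E_4$ is bounded very sharply. Obtaining such sharp bounds on the elliptic and hyperbolic parts of $\Tr T_4(N,2k)$, uniformly in the odd level $N$, while keeping $k_0(N)$ small enough to leave a tractable finite case check, is the technical heart of the argument.
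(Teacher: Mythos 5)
Your decomposition and overall strategy match the paper's: the same specialization of Newton's identity/Hecke composition to $m=4$, the same four-term splitting, the same use of the Eichler--Selberg trace formula with the identity contribution $A_1$ supplying main terms of size $\frac{2k-1}{12}\psi(N)m^{k-1}$ for $m=4,16$, and the same observation that $(\Tr T_4)^2$ dominates because it is quadratic in $k\psi(N)$ while the other three terms are linear. Your worry about the cross-term $2M_4E_4$ is exactly where the technical work lies, and it is handled in the paper by writing $\Tr T_4(N,2k)=\psi(N)4^k(\frac{2k-1}{24}+E_{N,k})$ with $\lrabs{E_{N,k}}$ bounded by explicit multiples of $2^{\omega(N)}/\psi(N)$ and $\sqrt{N}2^{\omega(N)}/\psi(N)$, which decay as $N\to\infty$.

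There is, however, one genuine flaw in the plan as stated: you propose to prove strict monotonicity $a_2(T_4(N,2(k+1)))>a_2(T_4(N,2k))$ \emph{for all admissible pairs} $(N,k)$ and deduce non-repetition from that. This is false for small $k$ --- the paper explicitly notes that, unlike $a_2(T_2(N,2k))$, the sequence $a_2(T_4(N,2k))$ is \emph{not} necessarily increasing below the threshold $k_N$, so a direct computation of small cases will not confirm monotonicity there. Consequently, establishing monotonicity only for $k\ge k_0(N)$ plus non-repetition among the small-$k$ values is still not enough: you must also rule out a collision between a small-$k$ value and a large-$k$ value, which the paper does by checking that every $a_2(T_4(N,2k))$ with $k<k_N$ is strictly less than $a_2(T_4(N,2k_N))$, the minimum of the increasing tail. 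That linking step is missing from your outline. A second, smaller point: you cannot ``bound $N$ in terms of $k$'' from the hypothesis $s(N,2k)\ge 2$, since that condition excludes only finitely many pairs and places no upper bound on $N$; the finiteness of the case check instead comes from showing that the error term $E(N)$ tends to $0$ as $N\to\infty$, so that beyond an absolute level threshold the monotonicity inequality holds for every $k\ge 1$ simultaneously.
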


In Section \ref{sect:T2T3}, we fix the weight $2k$  and level $N = 1$ and prove the following horizontal result.
\begin{theorem}\label{theorem:T2T3notequal}
    For any $k\geq 12$ and $k\neq 13$,
    \begin{equation}
        a_2(T_3(1,2k))< a_2(T_2(1,2k)).
    \end{equation}
\end{theorem}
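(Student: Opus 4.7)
The plan is to reduce the inequality to a comparison of traces of Hecke operators and then apply the Eichler-Selberg trace formula. By the Newton identity $a_2(T) = \tfrac{1}{2}\lrp{(\Tr T)^2 - \Tr T^2}$ combined with the Hecke relation $T_p^2 = T_{p^2} + p^{2k-1} T_1$ (valid for primes $p$, since $N=1$), one has for $m \in \{2,3\}$
$$2\, a_2(T_m(1,2k)) = (\Tr T_m(1,2k))^2 - \Tr T_{m^2}(1,2k) - m^{2k-1}\, s(1,2k).$$
Subtracting the $m=2$ and $m=3$ identities,
\begin{align*}
2\lrb{a_2(T_3(1,2k)) - a_2(T_2(1,2k))} &= (\Tr T_3(1,2k))^2 - (\Tr T_2(1,2k))^2 + \Tr T_4(1,2k) - \Tr T_9(1,2k) \\
&\qquad - (3^{2k-1} - 2^{2k-1})\, s(1,2k),
\end{align*}
so the task reduces to showing this expression is negative for $k \ge 12$, $k \ne 13$. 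The exclusion $k=13$ simply reflects that $s(1,26)=1$, so $a_2$ is undefined there.

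The expected dominant term is $-(3^{2k-1} - 2^{2k-1})\, s(1,2k)$. Indeed, for each fixed $m \in \{2,3,4,9\}$, the Eichler-Selberg trace formula expresses $\Tr T_m(1,2k)$ as a finite sum of products $P_k(t,m) H(4m-t^2)$ plus elementary hyperbolic corrections, and the standard bound $|P_k(t,m)| \le m^{k-1}/|\sin\theta_t|$ with $\cos\theta_t = t/(2\sqrt{m})$, together with the explicit small set of $t$ satisfying $t^2 \le 4m$, yields estimates of the form $|\Tr T_m(1,2k)| \le C_m\,(k+1)\, m^{k-1}$ for explicit constants $C_m$. Consequently the ``non-dominant'' part of the right-hand side is $O(k^2 \cdot 9^{k-1})$, while using the standard lower bound $s(1,2k) \ge \lfloor k/6 \rfloor - 1$ the dominant part is $\Theta(k \cdot 9^k)$. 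Hence the right-hand side is negative for all $k$ past some explicit threshold $K_0$.

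For the remaining finite range $12 \le k < K_0$ with $k \ne 13$, the inequality is verified directly by computing the four traces from the trace formula, or from known Fourier coefficients of the small-dimensional eigenform spaces. The main obstacle is sharpening the constants $C_m$ enough to keep $K_0$ small: for the square cases $m=4$ and $m=9$, the trace formula includes a degenerate contribution at $t = \pm 2\sqrt{m}$ of size $(2k-1)\,m^{k-1}$ (coming from $P_k$ evaluated at $\rho = \bar\rho$ as a derivative), and for $m=9$ the hyperbolic part produces a contribution proportional to $3^{2k-1}$ whose exponential base matches that of the dominant term. These must be controlled with explicit constants rather than order-of-magnitude estimates, both to ensure they cannot erode the dominant negative term and to keep the residual finite case-check numerically tractable.
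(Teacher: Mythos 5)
Your overall strategy is the same as the paper's: expand via $2\,a_2(T_m) = (\Tr T_m)^2 - \Tr T_{m^2} - m^{2k-1}s(1,2k)$, evaluate the four traces with the level-one Eichler--Selberg formula, bound the $P_{2k}(t,m)$ terms, isolate a dominant negative term, and finish the finitely many small $k$ by computation. However, there is a quantitative gap that breaks your asymptotic step as written. You assert the uniform bound $|\Tr T_m(1,2k)| \le C_m(k+1)m^{k-1}$ for $m \in \{2,3,4,9\}$ and conclude that the non-dominant part is $O(k^2\cdot 9^{k-1})$ while the dominant part is $\Theta(k\cdot 9^{k})$, ``hence negative past some explicit threshold $K_0$.'' But $k^2\cdot 9^{k-1}$ is \emph{not} eventually dominated by $k\cdot 9^{k} = 9k\cdot 9^{k-1}$: the ratio is $k/81 \to \infty$. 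Concretely, your bound gives $(\Tr T_3)^2 \le C_3^2(k+1)^2\, 9^{k-1}$, which for every fixed $C_3>0$ eventually exceeds the main negative term, whose size is only about $\tfrac{k}{2}\,9^{k-1}$. So no threshold $K_0$ exists under the bounds you state, and the argument does not close.

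The repair is exactly the refinement the paper makes. The factor $2k-1$ in a trace comes only from the degenerate term $t=\pm 2\sqrt{m}$, which exists only when $m$ is a perfect square; for the non-squares $m=2,3$ every $t$ with $t^2<4m$ obeys $|P_{2k}(t,m)| \le 2m^{k-1/2}/\sqrt{4m-t^2}$ with no $k$-dependence, whence $|\Tr T_3| = O(3^{k-1})$ and $(\Tr T_3)^2 = O(9^{k-1})$ (the paper's Lemma 5.1 gives $(\Tr T_3)^2 < 22\cdot 9^{k-1}$). Moreover the $(2k-1)$-sized main term of $\Tr T_9$, namely $\tfrac{2k-1}{12}\,9^{k-1}$, enters the difference with a minus sign and so reinforces the dominant negative term rather than eroding it; it should be kept with its sign, not absorbed into an absolute-value error. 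With these two adjustments the non-dominant part is $O(9^{k-1})$, the difference is negative for $k\ge 82$, and the range $12\le k<82$ is checked by computer. Your side remarks are fine: the exclusion of $k=13$ does reflect $s(1,26)=1$, and your identity for $2[a_2(T_3)-a_2(T_2)]$ agrees with the paper's Proposition 2.1.
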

Observe that $a_2(T_m(1, 2k))$ is only defined for $s(1,2k)\geq 2$, hence the assumptions on $k$ in this result. 

Next, in Section \ref{sect:primelevel}, we 
fix $k$ and $m=2$ and consider varying prime levels (including one).
\begin{theorem} \label{theorem:primelevel}
 If $k\ge 58$, then
    \begin{equation}
      a_{2}(T_2(p,2k))> a_{2}(T_2(q,2k))
    \end{equation}
 for all odd primes $p< q$, where $p$ can be taken to be $1$.
\end{theorem}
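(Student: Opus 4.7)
The plan is to reduce the claim, via Newton's identity and the Hecke relation $T_2^2 = T_4 + 2^{2k-1}T_1$ (valid since $N$ is odd), to estimates on traces. Newton--Girard gives
\[
  a_2(T_2(N,2k)) = \tfrac{1}{2}(\Tr T_2(N,2k))^2 - \tfrac{1}{2}\Tr T_4(N,2k) - 2^{2k-2}s(N,2k),
\]
so for odd primes $p < q$ (allowing $p=1$), the difference $a_2(T_2(p,2k)) - a_2(T_2(q,2k))$ equals
\[
  2^{2k-2}\bigl[s(q,2k)-s(p,2k)\bigr] + \tfrac{1}{2}\bigl[\Tr T_4(q,2k)-\Tr T_4(p,2k)\bigr] + \tfrac{1}{2}\bigl[(\Tr T_2(p,2k))^2-(\Tr T_2(q,2k))^2\bigr].
\]
The goal is to show that the first two summands are positive and of main order, and dominate the third (which may have either sign).

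Next I would apply the explicit Eichler--Selberg trace formula at prime level. Two facts drive the argument. First, $s(p,2k) = \frac{(2k-1)(p+1)}{12} + O(1)$ for $p$ prime, so the dimension difference $s(q,2k)-s(p,2k)$ has main term $\frac{(2k-1)(q-p)}{12}$, giving a contribution of size $\frac{(2k-1)(q-p)}{12}\cdot 4^{k-1}$. Second, since $4 = 2^2$ is a perfect square, $\Tr T_4(p,2k)$ carries an identity contribution $\frac{2k-1}{12}(p+1)\cdot 4^{k-1}$ with the same linear growth in $p$, while all remaining elliptic/hyperbolic/parabolic contributions to $\Tr T_4(p,2k)$ are $O(4^k)$ with constants independent of $p$ (each local factor at $p$ has the form $1 + \left(\frac{t^2-16}{p}\right) \le 2$). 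Together the first two terms contribute roughly $4^{k-1}\cdot \frac{(2k-1)(q-p)}{8}$ plus a uniformly $O(4^k)$ error.

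For the error term, the key observation is that $2$ is \emph{not} a perfect square, so $\Tr T_2(N,2k)$ has no identity contribution at all. The elliptic contribution involves only $t\in\{0,\pm 1,\pm 2\}$ (with small Hurwitz class numbers) and is $O(2^k)$; the hyperbolic and parabolic pieces are similarly $O(2^k)$, with constants independent of $p$. Hence $|\Tr T_2(p,2k)|^2 = O(4^k)$ uniformly in $p$, and the error term $\frac{1}{2}[(\Tr T_2(p,2k))^2 - (\Tr T_2(q,2k))^2]$ is $O(4^k)$.

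Since $q-p\ge 2$ for any two distinct primes (or $p=1,\,q\ge 3$), the main contribution is bounded below by $(2k-1)\cdot 4^{k-1}/4$, which dominates the $O(4^k)$ error whenever $k$ is at least some explicit threshold. The hard part is making the implicit constants explicit enough to pin the threshold at $k = 58$: one must tabulate the elliptic, hyperbolic, and parabolic contributions to $\Tr T_2(p,2k)$ and $\Tr T_4(p,2k)$ (in terms of Hurwitz class numbers and Legendre symbols at $p$), bound each uniformly in $p$, and handle the slightly different form of the trace formula at $p=1$. For a handful of weights just above the asymptotic cross-over, a direct numerical check of the inequality may also be needed to sharpen the threshold to $k \ge 58$.
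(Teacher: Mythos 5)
Your proposal is correct and follows essentially the same route as the paper: the same expansion of $a_2(T_2(\cdot,2k))$ via Proposition \ref{prop:a2-formula}, the same identification of the main term $4^{k-1}\frac{(2k-1)(q-p)}{8}$ coming from the dimension and the identity contribution to $\Tr T_4$, and the same uniform-in-$p$ $O(4^k)$ bounds on the remaining trace-formula terms (using $\mu(t,n,m)\le 2$ at prime level), with the threshold $k\ge 58$ emerging once the constants are made explicit. The only cosmetic difference is that the paper needs no supplementary numerical check: the explicit constants ($20.8$, $13.77$, $\tfrac{13}{6}$) alone yield $k\ge 58$ directly.
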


Finally, in Section \ref{sect:distinguish}, we use Theorems \ref{theorem:main} and \ref{theorem:main4} to extend the main result of \cite{Vilardi2018} to $m=2,4$.
We also discuss how one can use our method to extend this result to any given $m \ge 2$.
\begin{theorem} \label{theorem:distinguish-eigenforms}
    Let $m = 2 \text{ or }4$ be fixed, and let $f=\sum_{n\ge1} c_f(n) q^n\in S_{2k_1}(\SL_2(\ZZ))$ and $g=\sum_{n\ge1} c_g(n) q^n\in S_{2k_2}(\SL_2(\ZZ))$ be normalized Hecke eigenforms. Assume that the characteristic polynomials of the $T_m(1,2k_i)$ are irreducible. Then
    \begin{equation}
    f=g \quad \text{iff} \quad c_f(m) = c_g(m).
    \end{equation}
\end{theorem}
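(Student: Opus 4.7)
The plan is to mimic the strategy of \cite{Vilardi2018}, which handled the case $m=2$ via $\Tr T_2 = a_1(T_2(1,2k))$, but with the first coefficient replaced by the second coefficient $a_2$ and the non-repetition input supplied by Theorem \ref{theorem:main} (for $m=2$) or Theorem \ref{theorem:main4} (for $m=4$). Assume $c_f(m)=c_g(m)=:\alpha$; the objective is to deduce $f=g$.

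First, I would show that $\alpha$ forces the two characteristic polynomials to agree. Since $f$ and $g$ are normalized Hecke eigenforms, $\alpha$ is a $T_m$-eigenvalue in each of $S_{2k_i}(\SL_2(\ZZ))$, so it is a common root of the two monic polynomials $T_m(1,2k_i)(x)$. By the irreducibility hypothesis, each of these is the minimal polynomial of $\alpha$ over $\BQ$, and hence they coincide. In particular, $s(1,2k_1)=s(1,2k_2)$ and every coefficient matches. The asymmetric case $s(1,2k_1)\neq s(1,2k_2)$ is ruled out automatically, since two irreducible monic polynomials sharing a root must have equal degree.

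Next, I would deduce $k_1=k_2$ from the non-repetition of $a_2$. When $s(1,2k_i)\geq 2$, equality of the polynomials gives $a_2(T_m(1,2k_1))=a_2(T_m(1,2k_2))$, and Theorem \ref{theorem:main} or \ref{theorem:main4} (applied with $N=1$) immediately yields $k_1=k_2$. When $s(1,2k_i)=1$, the weight $2k_i$ lies in the finite list $\{12,16,18,20,22,26\}$, and I would dispatch this case by a direct table lookup, verifying that $c(2)$ (resp.\ $c(4)$) takes six distinct values across the unique normalized eigenforms in these six spaces.

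Finally, once $k_1=k_2=k$, irreducibility of $T_m(1,2k)(x)$ implies its roots are simple, so distinct normalized eigenforms in $S_{2k}(\SL_2(\ZZ))$ have distinct $T_m$-eigenvalues; hence $c_f(m)=c_g(m)$ forces $f=g$. Overall the argument is essentially a corollary of the two non-repetition theorems combined with a finite small-weight verification, and no step is a genuine obstacle; the only place requiring care is the bookkeeping at the six exceptional weights. The same scheme should extend to any $m\geq 2$ once one proves the analogous non-repetition statement for $a_2(T_m(1,2k))$ in the weight aspect and performs the corresponding finite check of $c(m)$ across the six small-weight eigenforms.
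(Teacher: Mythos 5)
Your proposal is correct and follows essentially the same route as the paper: irreducibility forces the two characteristic polynomials (equivalently, the Galois orbits of the shared eigenvalue) to coincide, the non-repetition of $a_2(T_m(1,2k))$ from Theorems \ref{theorem:main} and \ref{theorem:main4} then pins down $k_1=k_2$ when $s(1,2k_i)\ge 2$, the six one-dimensional weights $2k\in\{12,16,18,20,22,26\}$ are handled by a direct finite check, and simplicity of the roots finishes the argument. The only cosmetic difference is that you phrase the first step via equality of minimal polynomials while the paper phrases it via equality of eigenvalue sets; these are the same observation.
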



\section{Preliminaries}

In this section, we state the necessary results used to show the non-repetition of the $a_2$ coefficients of Hecke polynomials in various scenarios. Much of this background information can be found in \cite{previousPaper}.  Our work hinges on the following expression for $a_2(T_m(N, 2k))$ in terms of traces of Hecke operators:

\begin{proposition}[{\cite[Proposition 2.1]{previousPaper}}]  \label{prop:a2-formula}
Suppose $\gcd(N,m)=1$. Then    
\begin{align}\label{eq:step1}
        a_2(T_m(N,2k)) 
        &= \frac{1}{2}\left[(\Tr T_m(N,2k))^2- \sum_{d\mid m} d^{2k-1} \Tr T_{m^2/d^2}(N,2k) \right].
   \end{align}
\end{proposition}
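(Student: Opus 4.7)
The plan is to reduce the computation to two standard ingredients: a Newton-style identity expressing the second coefficient of a characteristic polynomial in terms of the first two power sums of eigenvalues, and the classical multiplication formula for Hecke operators. Combining them and using the coprimality hypothesis $\gcd(N,m)=1$ should yield the identity directly.

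First, I would invoke the general fact that for any operator $T$ acting on a finite-dimensional space with eigenvalues $\lambda_1,\dots,\lambda_s$, the coefficient $a_2$ of its characteristic polynomial equals the elementary symmetric polynomial $e_2(\lambda_1,\dots,\lambda_s)=\sum_{i<j}\lambda_i\lambda_j$. The Newton identity $e_2 = \tfrac{1}{2}(p_1^2 - p_2)$ (with $p_r = \sum_i \lambda_i^r$) then gives
\begin{equation}
a_2(T_m(N,2k)) \;=\; \tfrac{1}{2}\bigl[(\Tr T_m(N,2k))^2 - \Tr\bigl(T_m(N,2k)^2\bigr)\bigr].
\end{equation}
This reduces the problem to computing $\Tr(T_m(N,2k)^2)$ as a sum of traces of individual Hecke operators.

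Second, I would apply the standard Hecke multiplication formula
\begin{equation}
T_m(N,2k)\,T_n(N,2k) \;=\; \sum_{\substack{d\mid\gcd(m,n)\\ (d,N)=1}} d^{2k-1}\, T_{mn/d^2}(N,2k),
\end{equation}
which is available from \cite[Proposition 10.2.5]{cohen-stromberg} (or the discussion of Hecke algebras in \cite{serrehecke}). Setting $n=m$ and using $\gcd(N,m)=1$ to drop the side condition $(d,N)=1$ (since every $d\mid m$ is automatically coprime to $N$), I get
\begin{equation}
T_m(N,2k)^2 \;=\; \sum_{d\mid m} d^{2k-1}\, T_{m^2/d^2}(N,2k).
\end{equation}
Taking traces and substituting into the Newton identity produces the claimed formula.

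There is no substantive obstacle here: the argument is essentially a concatenation of two well-known identities together with one small bookkeeping observation about the coprimality hypothesis. The only thing worth double-checking is that the multiplication formula is being quoted with the correct normalization (so that the $d^{2k-1}$ weight matches the one appearing in the Fourier expansion given in the introduction), which is a direct consistency check rather than a real difficulty.
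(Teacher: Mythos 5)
Your proposal is correct and follows essentially the same route as the paper: both reduce $a_2$ to $\tfrac12[(\Tr T_m)^2-\Tr(T_m^2)]$ via the elementary symmetric function identity and then expand $T_m^2$ with the Hecke composition formula, using $\gcd(N,m)=1$ to drop the coprimality condition on $d$. No gaps.
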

\begin{proof}
  Let $\alpha_1, \dots, \alpha_{s(N,2k)}$ be the eigenvalues of the Hecke operator $T_m(N, 2k)$.   Then
   \begin{align}
        a_2(T_m(N,2k)) &= \sum_{1\leq i<j\leq s(N,2k)}\alpha_i\alpha_j \\
        &= \frac{1}{2}\left[ \left( \sum_i \alpha_i\right)^2-\sum_{j}\alpha_j^2 \right] \\
        &=\frac{1}{2}\left[(\Tr T_m(N,2k))^2-\Tr (T_m(N,2k)^2)\right].
   \end{align}
   Then using the well-known Hecke operator composition formula \cite[Theorem~10.2.9]{cohen-stromberg}, we have
    \begin{align}
        a_2(T_m(N,2k)) &= \frac{1}{2}\left[(\Tr T_m(N,2k))^2-\Tr (T_m(N,2k)^2)\right] \\
        &= \frac{1}{2}\left[(\Tr T_m(N,2k))^2- \sum_{d\mid m} d^{2k-1} \Tr T_{m^2/d^2}(N,2k) \right],
   \end{align}
   as desired.
\end{proof}


Next we introduce the Eichler-Selberg trace formula (\cite[p.~370-371]{knightly2006traces} and \cite[(34)]{serrehecke}), which we will use to compute $\Tr T_m(N,2k)$ explicitly. 

\begin{proposition}\label{eq:eich-selb}
Suppose $\gcd(N,m)=1$. Then
$$\Tr T_m(N, 2k) = A_{1,m}(N, 2k) + A_{2,m}(N, 2k) + A_{3,m}(N, 2k) + A_{4,m}(N, 2k),$$
where
\begin{align}
    A_{1,m}(N, 2k)&=\begin{cases}
        \frac{2k-1}{12}\psi(N)m^{k-1} & {\rm if } ~m~ {\rm is~a~perfect~square},\\
        0 & {\rm otherwise},\\
    \end{cases}\\
    A_{2,m}(N, 2k)&=-\frac{1}{2}\sum_{t^2<4m}P_{2k}(t,m)\sum_n h_w\left(\frac{t^2-4m}{n^2}\right)\mu(t,n,m),\\
    A_{3,m}(N, 2k)&=-\frac{1}{2}\sum_{d \mid m}\min(d,m/d)^{2k-1}\sump_{\tau}\varphi(\gcd(\tau,N/\tau))\mathbf{1}_N(y),\\
    A_{4,m}(N, 2k)&=\begin{cases}
        \sum_{\substack{c|m\\
    \gcd(N,m/c)=1}}c& {\rm if }~k=1,\\
    0 & {\rm otherwise}.
    \end{cases}
\end{align}
Here, we have
\begin{itemize}
    \item $\psi(N)= [\SL_2(\mathbb{Z}):\Gamma_0(N)]= N\prod_{p \mid N} \left(1 + \frac{1}{p}\right)$ \cite[p.~21]{diamond2005first},
    \item $n$ in the summation within $A_{2,m}(N, 2k)$ runs through all positive integers such that $n^2 \mid (t^2-4m)$ and $\frac{t^2-4m}{n^2}\equiv0,1\pmod 4$,
    \item $P_{2k}(t,m)$ is the coefficient of $x^{2k-2}$ in the power series expansion of $(1 - tx + mx^2)^{-1}$,
    \item $h_w\left(\frac{t^2-4m}{n^2}\right)$ the weighted class number of discriminant $\frac{t^2 - 4m}{n^2}$,
    \item  $\mu(t,n,m)=\frac{\psi(N)}{\psi(N/N_{n})}\sum_{c\mod N}1$, where $N_n=\gcd(N,n)$, and $c$ runs through all elements of $(\mathbb{Z}/N\mathbb{Z})^{\times}$ which lift to solutions of $c^2-tc+m\equiv0\pmod {NN_n}$,
    \item $\varphi$ is the Euler totient function,
    \item $ \sump_{\tau} $ means that $\tau$ runs through all positive divisors such that 
    $\gcd(\tau,N/\tau)$ divides $(d-\frac{m}{d})$,
    \item $y$ is the unique integer mod $\frac{N}{\gcd(\tau,N/\tau)}\mathbb{Z}$ such that $y\equiv d\mod \tau \mathbb{Z}$ and $y\equiv \frac{m}{d}\mod \frac{N}{\tau} \mathbb{Z}$,
    \item $\mathbf{1}_N(n)=1$ if $\gcd(n,N)=1$ and $\mathbf{1}_N(n)=0$ if $\gcd(n,N)>1$.
\end{itemize}
\end{proposition}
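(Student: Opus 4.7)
The statement is the classical Eichler--Selberg trace formula in its form for $\Gamma_0(N)$ with a Hecke operator $T_m$ satisfying $\gcd(N,m)=1$. Since a self-contained derivation is long and already appears in Knightly--Li, what I would actually do in the paper is just reference \cite{knightly2006traces} and \cite{serrehecke}. But if forced to give a proof sketch, here is the plan I would follow.

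The plan is to realize $T_m$ on $S_{2k}(\Gamma_0(N))$ as an integral operator with an explicit automorphic kernel, and then compute its trace by integrating the kernel on the diagonal of a fundamental domain. Concretely, let $\Delta_m(N)=\{\gamma\in M_2(\ZZ):\det\gamma=m,\ \gamma\equiv\left(\begin{smallmatrix}*&*\\0&*\end{smallmatrix}\right)\pmod N\}$, and form the kernel
\begin{equation}
K_m(z,w) \;=\; \sum_{\gamma\in \Delta_m(N)} j(\gamma,w)^{-2k}\, k_{2k}(\gamma w,z),
\end{equation}
where $k_{2k}$ is the Petersson reproducing kernel of weight $2k$. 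Then $\Tr T_m(N,2k)$ equals the integral of $K_m(z,z)$ over a fundamental domain $\Gamma_0(N)\backslash\mathbb{H}$, up to a standard normalization.

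Next, I would break the sum over $\Delta_m(N)$ into $\Gamma_0(N)$-conjugacy classes, classified by the characteristic polynomial $X^2-tX+m$ with $t=\operatorname{tr}\gamma$. There are four types:
\begin{enumerate}[(i)]
\item \emph{Scalar} ($t^2=4m$, so $m$ is a square and $\gamma=\pm\sqrt{m}\,I$): these matrices act trivially up to a sign and their fixed-point integral gives the constant $\tfrac{2k-1}{12}\psi(N)\,m^{k-1}$, the term $A_{1,m}$.
\item \emph{Elliptic} ($t^2<4m$): each conjugacy class fixes a unique point in $\mathbb{H}$; the integral becomes a sum indexed by primitive representatives, weighted by $P_{2k}(t,m)$ (which comes from evaluating $j(\gamma,\cdot)^{-2k}$ at the fixed point) and by weighted class numbers $h_w((t^2-4m)/n^2)$ of the associated imaginary quadratic orders, with the level correction $\mu(t,n,m)$ counting lifts of $\gamma\bmod NN_n$. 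This yields $A_{2,m}$.
\item \emph{Hyperbolic/split} ($t^2>4m$ with rational eigenvalues $d,m/d$): each class is parameterized by a divisor $d\mid m$; the integral is over a geodesic and collapses to $\tfrac12\min(d,m/d)^{2k-1}$ with a level-theoretic factor tracking which cusps of $\Gamma_0(N)$ are compatible with the splitting data $(\tau, y)$. This yields $A_{3,m}$.
\item \emph{Parabolic} ($t^2=4m$ nonscalar, only occurring when $k=1$): the relevant integrals diverge in weight $2$ and require Hecke's holomorphic-projection/regularization trick, producing the divisor sum $A_{4,m}$ as a weight-$2$ boundary correction.
\end{enumerate}

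The heart of the work, and the step I expect to be the main obstacle, is item (ii): one has to match the geometric count of elliptic conjugacy classes in $\Delta_m(N)$ with class numbers of orders in imaginary quadratic fields (via the Deuring/Eichler dictionary between matrix conjugacy classes and ideal classes) and simultaneously keep track of the level structure, which is exactly where the factor $\mu(t,n,m)$ is produced. Once (ii) is handled, items (i) and (iii) are direct computations, and (iv) is a standard regularization. Summing the four contributions gives the stated decomposition $\Tr T_m(N,2k)=A_{1,m}+A_{2,m}+A_{3,m}+A_{4,m}$; for the detailed bookkeeping of each $A_{i,m}$, I would defer to \cite[Ch.~6--8]{knightly2006traces}.
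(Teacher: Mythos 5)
Your proposal matches the paper exactly in its essential move: the paper offers no proof of this proposition at all, simply citing Knightly--Li and Serre and remarking that the reader should consult \cite{knightly2006traces} for the derivation, which is precisely what you say you would do. Your additional sketch of the kernel-function argument (Petersson kernel, decomposition into scalar, elliptic, hyperbolic, and weight-two correction terms) is a faithful outline of the standard proof in that reference, so there is nothing to flag.
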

For a more in-depth discussion of this formula and its proof, see \cite{knightly2006traces}. 

Finally, the following proposition provides a closed-form expression for the dimension $s(N,2k)$. Note that since $T_1(N,2k)$ is the identity operator, $s(N,2k) = \Tr T_1(N,2k)$ is one of the terms appearing in Proposition \ref{prop:a2-formula}. 

\begin{proposition}[{\cite[Corollary 7.4.3]{cohen-stromberg}}] \label{prop:dim-formula}
    Let $N \ge 1$ and $k \ge 1$. Then
    \begin{align}
        s(N,2k) \,=\, & \frac{2k-1}{12} \psi(N) - \frac12  \sum_{d\mid N} \varphi(\gcd(d,N/d)) + \delta_{k,1} \\
        & - \varepsilon(N/9)\, c_3(k) \prod_{p \mid N} \lrp{1 + \lrp{\frac{-3}{p}} } 
        - \varepsilon(N/4)\, c_4(k) \prod_{p \mid N} \lrp{1 + \lrp{\frac{-4}{p}} }.
    \end{align}
    Here, $\delta_{k,1}$ denotes the Kronecker delta; $\varepsilon(x) = 1$ if $x \not\in \ZZ$, $\varepsilon(x) = 0$ if $x \in \ZZ$; $\lrp{\frac{\cdot}{p}}$ denotes the Legendre symbol; $c_3(k) = \frac{2k-1}{3} - \lrfloor{\frac{2k}{3}} $; and $c_4(k) = \frac{2k-1}{4} - \lrfloor{\frac{2k}{4}} $.
    
\end{proposition}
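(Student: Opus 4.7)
The plan is to deduce Proposition~\ref{prop:dim-formula} from the Eichler-Selberg trace formula (Proposition~\ref{eq:eich-selb}) by specializing to $m=1$. Since $T_1(N,2k)$ is the identity operator on $S_{2k}(\Gamma_0(N))$, one has $s(N,2k) = \Tr T_1(N,2k) = A_{1,1}(N,2k) + A_{2,1}(N,2k) + A_{3,1}(N,2k) + A_{4,1}(N,2k)$, and the strategy is to identify each summand with the corresponding term in the closed-form expression.

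Three of the four contributions are essentially mechanical. Since $1$ is a perfect square, $A_{1,1}(N,2k) = \frac{2k-1}{12}\psi(N)$, producing the leading term. For $A_{4,1}$, only $k=1$ contributes, and the inner sum collapses to $c=1$ with $\gcd(N,1)=1$, giving $\delta_{k,1}$. For $A_{3,1}$, the only divisor $d$ of $m=1$ is $d=1$, so $\min(d,m/d)=1$ and $d - m/d = 0$; consequently the divisibility condition on $\gcd(\tau,N/\tau)$ holds for every $\tau\mid N$, and the congruences $y\equiv 1\pmod{\tau}$ and $y\equiv 1\pmod{N/\tau}$ force $\gcd(y,N)=1$. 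Hence $\mathbf{1}_N(y)=1$ throughout, and $A_{3,1}(N,2k) = -\tfrac{1}{2}\sum_{d\mid N}\varphi(\gcd(d,N/d))$, matching the second term.

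The substantive work is in $A_{2,1}(N,2k)$, which must reproduce the elliptic-point corrections involving $c_3(k)$, $c_4(k)$, the Legendre-symbol products, and the $\varepsilon$ factors. The constraint $t^2<4$ restricts $t$ to $\{-1,0,1\}$. For $t=0$ the discriminant is $-4$, the only admissible $n$ is $1$, $h_w(-4)=1/2$, and $P_{2k}(0,1)=(-1)^{k-1}$ as the coefficient of $x^{2k-2}$ in $(1+x^2)^{-1}$; combining these with the prefactor $-1/2$ yields a term proportional to $c_4(k)$. For $t=\pm 1$ the discriminant is $-3$, $h_w(-3)=1/3$, and $P_{2k}(\pm 1,1)$ is periodic in $k$, combining to give $c_3(k)$. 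The dependence on $N$ is carried by the multiplier $\mu(t,n,1)$: counting solutions of $c^2 - tc + 1 \equiv 0\pmod N$ via CRT and Hensel lifting yields $\prod_{p\mid N}\left(1+\left(\frac{-4}{p}\right)\right)$ and $\prod_{p\mid N}\left(1+\left(\frac{-3}{p}\right)\right)$ in the two cases, while the obstructions $4 \mid N$ and $9 \mid N$ supply the $\varepsilon(N/4)$ and $\varepsilon(N/9)$ factors.

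The main obstacle is therefore the careful bookkeeping inside $A_{2,1}$: one must verify the combinatorial identity between $\mu(t,n,1)$ and the claimed product over $p \mid N$, and check that $P_{2k}(t,1)$ for $t\in\{0,\pm 1\}$ matches $c_4(k)$ and $c_3(k)$ uniformly across all residues of $2k$ modulo $4$ and $6$. An alternative route, which avoids the trace-formula bookkeeping entirely, is to apply Riemann-Roch on the modular curve $X_0(N)$ using the standard counts of cusps and elliptic points for $\Gamma_0(N)$; this is the path taken in the Cohen-Stromberg reference cited in the statement.
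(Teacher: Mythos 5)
Your proposal is correct, but it takes a genuinely different route from the paper: the paper gives no proof at all for this proposition, citing it directly to Cohen--Stromberg, where the formula is obtained by the Riemann--Roch/valence-formula computation on $X_0(N)$ (counting cusps and elliptic points) that you mention at the end as the ``alternative route.'' Your derivation instead specializes the Eichler--Selberg trace formula (Proposition~\ref{eq:eich-selb}) to $m=1$ and uses $s(N,2k)=\Tr T_1(N,2k)$, an identification the paper itself points out in the sentence preceding the proposition but does not exploit as a proof. The bookkeeping you flag as the main obstacle does go through: $A_{1,1}$ and $A_{4,1}$ give the leading term and $\delta_{k,1}$ as you say; for $A_{3,1}$ the condition $\gcd(\tau,N/\tau)\mid 0$ is vacuous and $y\equiv 1$ modulo both $\tau$ and $N/\tau$ forces $\mathbf{1}_N(y)=1$; and in $A_{2,1}$ one checks $P_{2k}(0,1)=(-1)^{k-1}=4\,c_4(k)$ and $P_{2k}(\pm1,1)=\frac{\sin((2k-1)\pi/3)}{\sin(\pi/3)}=3\,c_3(k)$, which combine with $h_w(-4)=\tfrac12$, $h_w(-3)=\tfrac13$, the overall $-\tfrac12$, and the local solution counts of $c^2-tc+1\equiv 0$ (which vanish when $4\mid N$, resp.\ $9\mid N$, giving the $\varepsilon$ factors, and otherwise multiply to $\prod_{p\mid N}\lrp{1+\lrp{\frac{-4}{p}}}$, resp.\ $\prod_{p\mid N}\lrp{1+\lrp{\frac{-3}{p}}}$) to produce exactly the two correction terms. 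What your approach buys is self-containedness relative to the tools already assembled in Section~2, at the cost of resting the elementary dimension formula on the much deeper trace formula; the cited geometric proof is logically lighter and is the standard reference, which is presumably why the authors chose to cite rather than derive.
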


\section{Non-repetition of \texorpdfstring{$a_2(T_2(N,2k))$}{a2(T2(N,2k))} for varying weight}\label{sect:T2nonrepetition}

In this section, we prove Theorem \ref{theorem:main}, which claims that for any fixed odd $N \geq 1$, $a_2(T_2(N, 2k))$ is a strictly decreasing function of $k$. Here we are only considering $k \ge 1$ such that $s(N,2k) \ge 2$ (which will be all but finitely many values of $k$). 
We prove this strictly decreasing property by showing that $a_2(T_2(N, 2k+2)) - a_2(T_2(N, 2k)) < 0$ for sufficiently large $k$. We will then check the finitely many remaining cases by computer.

Specializing Proposition \ref{prop:a2-formula} to $m=2$, we obtain
\begin{align}
    &a_2(T_2(N, 2k + 2)) - a_2(T_2(N, 2k))\\ 
    = &\frac{1}{2} 
    \lrb{(\Tr T_2(N, 2k+2))^2-\Tr T_4(N, 2k+2) - 2^{2k+1} s(N, 2k+2) } \\
    & -\frac{1}{2}
    \lrb{(\Tr T_2(N, 2k))^2-\Tr T_4(N, 2k) - 2^{2k-1} s(N,2k)}  \\
    = &\lrp{ \frac12
        (\Tr T_2(N, 2k+2))^2 - \frac12 (\Tr T_2(N, 2k))^2 } 
    + \lrp{ - \frac12 \Tr T_4(N, 2k+2) + \frac12 \Tr T_4(N, 2k)} \\
    &+ \lrp{-  4^{k} s(N, 2k+2) + 4^{k-1}          s(N, 2k)
    }.
    \label{eqn:a2_diff}
\end{align}
We will then bound the following three components of \eqref{eqn:a2_diff} separately:
\begin{itemize}
    \item $\frac{1}{2}(\Tr T_2(N, 2k+2))^2 - \frac{1}{2}(\Tr T_2(N, 2k))^2$,
    \item $-\frac{1}{2}\Tr T_4(N, 2k+2) + \frac{1}{2}\Tr T_4(N, 2k)$,
    \item $-4^{k}s(N, 2k+2) + 4^{k-1}s(N, 2k)$.
\end{itemize}
We will express these bounds in terms of the following $\theta_i(N)$ error functions. Here, $\omega(N)$ denotes the number of distinct prime divisors of $N$ and $\sigma_0(N)$ denotes the number of divisors of $N$. Observe that each of these $\theta_i(N)$ tends to $0$ as $N \rightarrow \infty$ since $\psi(N) \geq N$ and 
$2^{\omega(N)}, \, \sigma_0(N) = O(N^\varepsilon)$ for all $\varepsilon > 0$. 
\begin{equation} \label{eqn:theta-i-def}
    \begin{alignedat}{5}
        \theta_1(N) &:= \frac{ \sqrt{N} 2^{\omega(N)} }{\psi(N)},
        \qquad\qquad
        & \theta_2(N) &:= \frac{\sigma_0(N)}{\psi(N)}, 
        \qquad\qquad 
        & \theta_3(N) &:= \frac{4^{\omega(N)}}{\psi(N)}, 
        \\
        \theta_4(N) &:= \frac{2^{\omega(N)}}{\psi(N)}, 
        \qquad\qquad 
        & \theta_5(N) &:= \frac{1}{\psi(N)}.
        & &
    \end{alignedat}
\end{equation}

For the first component of \eqref{eqn:a2_diff}, we have by \cite[Proposition 4.6]{previousPaper} that
\begin{align}
    & \frac{1}{2}(\Tr T_2(N, 2k+2))^2 - \frac{1}{2}(\Tr T_2(N, 2k))^2 \\
    \leq &\frac{1}{2}(\Tr T_2(N, 2k+2))^2 \\
    \leq &
    \frac12 \lrb{  
    3 + (2^{k+5/2} + 1) 2^{\omega(N)}
    }^2 \\
    \leq & 
    \frac12 \lrb{ \lrp{ 
    3 + (8\sqrt{2} + 1)  2^{\omega(N)} } 2^{k-1}
    }^2 \\
    =&  \frac12\cdot 4^{k-1} \lrb{
        9 + 6(8\sqrt{2} + 1)  2^{\omega(N)} +  (8\sqrt{2} + 1)^2 4^{\omega(N)}
    } \\
    =& 4^{k-1} \psi(N) \lrb{ 
        \frac92 \theta_5(N) + \lrp{24\sqrt{2} + 3} \theta_4(N) + \lrp{\frac{129}{2} + 8\sqrt{2}} \theta_3(N)
    }.
    \label{eqn:component1}
\end{align}

For the second component of \eqref{eqn:a2_diff}, we have by the Eichler-Selberg trace formula,
\begin{align}
& -\frac{1}{2}\Tr T_4(N, 2k+2) + \frac{1}{2}\Tr T_4(N, 2k) \\
=& -\frac{1}{2}\left(\frac{2k+1}{12}\psi(N)4^{k}+A_{2,4}(N,2k+2)+A_{3,4}(N,2k+2)+A_{4,4}(N,2k+2) 
 \right) \\
&+\frac{1}{2}\left(\frac{2k-1}{12}\psi(N)4^{k-1}+A_{2,4}(N,2k)+A_{3,4}(N,2k)+A_{4,4}(N,2k)\right) \\
\leq & \lrp{ \frac{-6k-5}{24} } \psi(N) 4^{k-1}  \\
&+ \frac12 \Big(
    \lrabs{A_{2,4}(N,2k+2)} + \lrabs{A_{2,4}(N,2k)} - A_{3,4}(N,2k+2) + A_{4,4}(N,2k)
\Big). \label{eqn:temp-T4-error}
\end{align}
Here, we used the facts that $A_{3,4}(N,2k) \leq 0$ and $-A_{4,4}(N,2k+2) \leq 0$.
We also already have bounds on each of the $A_i$ terms in \eqref{eqn:temp-T4-error}. From the proof of \cite[Proposition 4.7]{previousPaper},
\begin{equation}
\begin{alignedat}{1}
    |A_{2,4}(N,2k)| &\leq 4^{k-1}  \frac{61}{3} 2^{\omega(N)}, \\
    |A_{2,4}(N,2k+2)| &\leq 4^{k}  \frac{61}{3} 2^{\omega(N)} = 4^{k-1} \frac{244}{3} 2^{\omega(N)},
    \\
    -A_{3,4}(N, 2k+2) &\leq 2\sigma_0(N)+4^{k}2^{\omega(N)}\sqrt{N} \leq 4^{k-1} \lrp{ 2 \sigma_0(N) + 4 \cdot  2^{\omega(N)}\sqrt{N}},  \\
    A_{4,4} &\leq 7 \leq 4^{k-1} \cdot 7.
\end{alignedat}
\end{equation}
Then substituting these bounds into \eqref{eqn:temp-T4-error}, we obtain
\begin{align}
& -\frac{1}{2}\Tr T_4(N, 2k+2) + \frac{1}{2}\Tr T_4(N, 2k) \\
&\leq \psi(N) 4^{k-1} \lrb{\frac{-6k-5}{24} + \frac{305}{6} \theta_4(N) +  \theta_2(N) + 2\, \theta_1(N) + \frac72 \theta_5(N)  }.   
 \label{eqn:component2}
\end{align}

Finally, we bound the third component of \eqref{eqn:a2_diff}.
Observe that from the dimension formula given in Proposition \ref{prop:dim-formula}, we immediately have 
\begin{equation} \label{eqn:dim-formula-terms-bounds}
\begin{alignedat}{3}
    \lrabs{c_3(k)} &\leq \frac13,  \qquad\qquad &
    \prod_{p\mid N} \lrp{1 + \lrp{\frac{-3}{p}}} &\leq 2^{\omega(N)} ,  \\
    \lrabs{c_4(k)} &\leq \frac14, \qquad\qquad &
    \prod_{p\mid N} \lrp{1 + \lrp{\frac{-4}{p}}} &\leq 2^{\omega(N)}.
\end{alignedat}
\end{equation}
Additionally, using multiplicativity in $N$, it is straightforward to show (e.g. in 
\cite[Definition~12\textquotesingle(B),~Equation~(7)]{martin}) that 
\begin{align}
     \sum_{d\mid N} \varphi(\gcd(d,N/d)) \leq 2^{\omega(N)} \sqrt{N}.
\end{align}
Applying each of these bounds to the dimension formula given in Proposition \ref{prop:dim-formula} yields
\begin{align}
    &-4^{k}s(N, 2k+2) + 4^{k-1}s(N, 2k) \\
    =& -4^k \Bigg[ 
    \frac{2k+1}{12} \psi(N) - \frac12  \sum_{d\mid N} \varphi(\gcd(d,N/d)) + \delta_{k+1,1} \\
    & - \varepsilon(N/9)\, c_3(k+1) \prod_{p \mid N} \lrp{1 + \lrp{\frac{-3}{p}} } 
        - \varepsilon(N/4)\, c_4(k+1) \prod_{p \mid N} \lrp{1 + \lrp{\frac{-4}{p}} }
    \Bigg] \\
    & + 4^{k-1} \Bigg[ 
    \frac{2k-1}{12} \psi(N) - \frac12  \sum_{d\mid N} \varphi(\gcd(d,N/d)) + \delta_{k,1} \\
    & - \varepsilon(N/9)\, c_3(k) \prod_{p \mid N} \lrp{1 + \lrp{\frac{-3}{p}} } 
    - \varepsilon(N/4)\, c_4(k) \prod_{p \mid N} \lrp{1 + \lrp{\frac{-4}{p}} }
    \Bigg] \\
    \leq & \, 4^{k-1} \lrb{ -\frac{8k+4}{12} \psi(N)  + \frac{2k-1}{12} \psi(N) + \frac32  2^{\omega(N)} \sqrt{N} + 1 + \lrp{\frac43 + \frac44 + \frac13 + \frac14 } 2^{\omega(N)} } \\
    = & \, 4^{k-1} \psi(N) \lrb{ \frac{-6k-5}{12}  + \frac32 \theta_1(N) + \theta_5(N) + \frac{35}{12} \theta_4(N) }. \label{eqn:component3}
\end{align}

With these bounds on the three components of \eqref{eqn:a2_diff}, we are now able to prove Theorem \ref{theorem:main}.

{
\renewcommand{\thetheorem}{\ref{theorem:main}}
\begin{theorem}
    Let $N \ge 1$ odd be fixed, and consider $k \geq 1$ such that $s(N,2k) \ge 2$. Then 
    $a_2(T_2(N, 2k))$
    takes no repeated values as $k$ varies.
    In fact, $a_2(T_2(N, 2k))$ is a strictly decreasing function of $k$.
\end{theorem}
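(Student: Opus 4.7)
The plan is to sum the three component bounds \eqref{eqn:component1}, \eqref{eqn:component2}, and \eqref{eqn:component3} already derived for the decomposition \eqref{eqn:a2_diff} into a single upper bound on $a_2(T_2(N, 2k+2)) - a_2(T_2(N, 2k))$, show that this upper bound is negative for all $(N, k)$ outside a finite exceptional set, and then dispatch the remaining cases by direct computation.

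After extracting the common factor $4^{k-1}\psi(N)$, the dominant negative contributions from Components 2 and 3 combine into
\[
\frac{-6k-5}{24} + \frac{-6k-5}{12} = -\frac{6k+5}{8},
\]
while every other term is positive and of the form $c_i \, \theta_i(N)$ for some absolute constant $c_i$ and $\theta_i$ as defined in \eqref{eqn:theta-i-def}. Crucially, each $\theta_i(N)$ is bounded above by a universal constant on all $N \geq 1$ (indeed each tends to $0$ as $N \to \infty$), so the total error admits a uniform bound $C_0$ independent of $N$. Consequently, once $k$ exceeds the universal threshold $k_1 := \lceil (8C_0-5)/6 \rceil$, the resulting upper bound is strictly negative for every odd $N$, establishing the theorem in the range $k > k_1$.

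For the finitely many weights $k \leq k_1$, the same summed bound yields negativity as soon as $N$ is sufficiently large: the contributions $c_i\,\theta_i(N)$ all vanish as $N \to \infty$ while $-(6k+5)/8$ is a fixed negative quantity depending only on $k$. Each fixed $k \leq k_1$ therefore leaves only finitely many odd $N$ to verify. The remaining finite list of $(N, k)$ with $s(N, 2k) \geq 2$ can then be checked directly by computing the characteristic polynomials of $T_2(N, 2k)$ and $T_2(N, 2k+2)$ in a computer algebra system such as \texttt{SageMath}.

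The main obstacle is bookkeeping: one must track the constants $c_i$ and the pointwise behavior of the $\theta_i(N)$ carefully enough that the threshold $k_1$ and the per-$k$ level cutoffs are small enough for the exceptional set to be computationally tractable. Since the main term grows linearly in $k$ while the total error is of bounded order in $N$, we expect $k_1$ and the exceptional list of small odd levels to both be modest, making the final verification routine.
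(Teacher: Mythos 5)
Your proposal is correct and follows essentially the same route as the paper: it sums the same three component bounds into $4^{k-1}\psi(N)\bigl[\tfrac{-6k-5}{8} + E(N)\bigr]$ and plays the linear-in-$k$ main term against the error $E(N)$, finishing with a finite computer check. The only difference is the order of the case analysis --- you first take $k$ large uniformly in $N$ (via $\sup_N E(N) \le C_0$) and then, for each small $k$, take $N$ large, whereas the paper first takes $N$ large uniformly in $k$ (showing $E(N) < \tfrac{11}{8} \le \tfrac{6k+5}{8}$ for $N \ge 3{,}392{,}663$) and then, for each remaining odd $N$, takes $k \ge k_N$; the exceptional set $\{(N,k) : \tfrac{6k+5}{8} \le E(N)\}$ to be verified by machine is the same either way.
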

\addtocounter{theorem}{-1}
}

\begin{proof}

Applying the bounds \eqref{eqn:component1}, \eqref{eqn:component2}, and \eqref{eqn:component3} to \eqref{eqn:a2_diff} yields the formula
\begin{align}
    & a_2(T_2(N, 2k + 2)) - a_2(T_2(N, 2k)) \\
    \leq & \, 4^{k-1} \psi(N) \Bigg[
        \lrp{ 
            \frac92 \theta_5(N) + \lrp{24\sqrt{2} + 3} \theta_4(N) + \lrp{\frac{129}{2} + 8\sqrt{2}} \theta_3(N)
        } \\
        &\qquad\qquad +
        \lrp{
            \frac{-6k-5}{24} + \frac{305}{6} \theta_4(N) +  \theta_2(N) + 2\, \theta_1(N) + \frac72 \theta_5(N)  
        } \\
        &\qquad\qquad +
        \lrp{ 
            \frac{-6k-5}{12}  + \frac32 \theta_1(N) + \theta_5(N) + \frac{35}{12} \theta_4(N) 
        }
    \Bigg] \\
    =& \, 4^{k-1} \psi(N) \lrb{ \frac{-6k-5}{8} + E(N) },  \label{eqn:a2_diff_EN}
\end{align}
where
\begin{align} \label{eqn:EN-def}
    E(N) = \frac72 \theta_1(N) + \theta_2(N) + \lrp{\frac{129}{2} + 8\sqrt{2}} \theta_3(N) + \lrp{24\sqrt{2} + \frac{323}{6} } \theta_4(N) + 9\, \theta_5(N).
\end{align}

We can also give an explicit numerical upper bound on $E(N)$. Using an identical argument as in \cite[Lemma 2.4]{ross}, one can show the following bounds. (These mirror \cite[Lemma 4.2]{previousPaper}, but with smaller constants; see \cite{ross-code}.)
\begin{align}
    2^{\omega(N)} \leq 4.862 \cdot N^{1/4}  \qquad \text{and} \qquad \sigma_0(N) \leq 8.447 \cdot N^{1/4}. \label{eqn:temp-mult-bounds}
\end{align} 
Additionally, recall that $\psi(N) \geq N$.
Then, using the definitions of the $\theta_i(N)$ \eqref{eqn:theta-i-def}, we substitute these bounds into \eqref{eqn:EN-def} to obtain
\begin{align} 
    E(N) \leq&\, 
    \frac72 \cdot 4.862 \cdot N^{-1/4} 
    + 8.447 \cdot N^{-3/4} 
    + \lrp{\frac{129}{2} + 8\sqrt{2}} \cdot 4.862^2 \cdot N^{-1/2} \\
    &
    + \lrp{24\sqrt{2} + \frac{323}{6} } \cdot 4.862 \cdot N^{-3/4} 
    + 9 \cdot N^{-1} \\
    <&\, \frac{11}{8} \qquad \text{ for } N \geq 3,\!392,\!663.
\end{align}
This means that since $\frac{-6k-5}{8} \leq \frac{-11}{8}$, we have by \eqref{eqn:a2_diff_EN} that $a_2(T_2(N, 2k + 2)) - a_2(T_2(N, 2k)) < 0$ for all $N \geq 3,\!392,\!663$ (independently of $k$). This verifies that for each odd $N \geq 3,\!392,\!663$, $a_2(T_2(N, 2k))$ is a strictly decreasing function of $k$.
Then for each of the remaining fixed odd values of $N < 3,\!392,\!663$, we will have $\frac{6k+5}{8} > E(N)$ (and hence $a_2(T_2(N, 2k + 2)) - a_2(T_2(N, 2k)) < 0$) for sufficiently large $k$, say $k \geq k_N$. This verifies that $a_2(T_2(N, 2k))$ is strictly decreasing for $k \geq k_N$.
We then verify by computer that $a_2(T_2(N, 2k))$ is strictly decreasing for  $1 \le k \le k_N$ such that $s(N,2k)\ge 2$; see \cite{ross-code} for the code. 

This shows that for any fixed odd $N$, $a_2(T_2(N, 2k))$ is a strictly decreasing function of $k$,  completing the proof.
\end{proof}

\section{Non-repetition of \texorpdfstring{$a_2(T_4(N,2k))$}{a2(T4(N,2k))} for varying weight}\label{sect:T4nonrepetition}

In this section, we prove Theorem \ref{theorem:main4}, showing the non-repetition of $a_2(T_4(N, 2k))$ as a function of $k$. We prove this result by showing that $a_2(T_4(N, 2k))$ is strictly increasing for sufficiently large $k$. We then check the finitely many remaining cases by computer.  
The details are very similar to that of the previous section, so we forgo repeating every detail of all the same bounding arguments. 

Specializing Proposition \ref{prop:a2-formula} to $m=4$, we obtain
\begin{align}
    &a_2(T_4(N, 2k + 2)) - a_2(T_4(N, 2k))\\ 
    =\, &\frac{1}{2} 
    \lrb{(\Tr T_4(N, 2k+2))^2-\Tr T_{16}(N, 2k+2) -  2^{2k+1} \Tr T_4(N, 2k+2) -  4^{2k+1} s(N, 2k+2) } \\
    & -\frac{1}{2} 
    \lrb{(\Tr T_4(N, 2k))^2-\Tr T_{16}(N, 2k) -  2^{2k-1} \Tr T_4(N, 2k) -  4^{2k-1} s(N, 2k) } \\
    =\, & \frac12 \Big( (\Tr T_4(N, 2k+2))^2 - (\Tr T_4(N, 2k))^2 \Big) + \frac12 \Big( - \Tr T_{16}(N, 2k+2) + \Tr T_{16}(N, 2k) \Big)  \\
    & + \frac12 \lrp{- 2^{2k+1} \Tr T_4(N, 2k+2) + 2^{2k-1} \Tr T_4(N, 2k) } 
    + \frac12 \lrp{- 4^{2k+1} s(N, 2k+2) + 4^{2k-1} s(N, 2k) }.  \label{eqn:a2_4_diff}
\end{align}
We will then bound the following four components of \eqref{eqn:a2_4_diff} separately:
\begin{itemize}
    \item $\frac{1}{2}(\Tr T_4(N, 2k+2))^2 - \frac{1}{2}(\Tr T_4(N, 2k))^2$,
    \item $-\frac{1}{2}\Tr T_{16}(N, 2k+2) + \frac{1}{2}\Tr T_{16}(N, 2k)$,
    \item $- \frac12 \cdot 2^{2k+1} \Tr T_4(N, 2k+2) + \frac12 \cdot 2^{2k-1} \Tr T_4(N, 2k)$,
    \item $-\frac12 \cdot 4^{2k+1}s(N, 2k+2) + \frac12 \cdot 4^{2k-1}s(N, 2k)$.
\end{itemize}

For the first component of \eqref{eqn:a2_4_diff}, we can again follow the argument of \cite[Proposition 4.7]{previousPaper} to obtain \eqref{eqn:temp-tr-t4-bounds} below. See also the proof of \cite[Proposition 5.5]{ross-xue} for the specific bounding details.
\begin{equation} \label{eqn:temp-tr-t4-bounds}
\Tr T_4(N, 2k) = \psi(N) 4^{k} \lrp{\frac{2k-1}{24} + E_{N,k}}, \quad \text{where} \quad \lrabs{E_{N,k}} \leq \frac{17}{2} \theta_4(N) + \frac14 \theta_1(N).
\end{equation}
This means that 
\begin{align}
    & \ \ \ \frac{1}{2}(\Tr T_4(N, 2k+2))^2 - \frac{1}{2}(\Tr T_4(N, 2k))^2 \\
    &= \frac{\psi(N)^2 16^{k+1}}{2} \lrb{ 
        \lrp{\frac{2k+1}{24}}^2 + \frac{2k+1}{12} E_{N,k+1} + E_{N,k+1}^2
    } \\
    & \ \ \ -
    \frac{\psi(N)^2 16^{k}}{2} \lrb{ 
        \lrp{\frac{2k-1}{24}}^2 + \frac{2k-1}{12} E_{N,k} + E_{N,k}^2
    } \\
    &\geq \frac{ \psi(N)^2 16^{k} }{2} \lrb{ 
        15 \lrp{\frac{2k+1}{24}}^2 - 16\frac{2k+1}{12} \lrabs{ E_{N,k+1} } - \frac{2k+1}{12} \lrabs{ E_{N,k} } - E_{N,k}^2
    } \\
    &\geq \frac{ \psi(N)^2 16^{k} (2k+1) }{2} \lrb{ 
        \frac{10k+5}{192} - \frac{16}{12} \lrabs{ E_{N,k+1} } - \frac{1}{12} \lrabs{ E_{N,k} } - \frac{1}{3} E_{N,k}^2
    } 
    \\
    &\geq \frac{ \psi(N)^2 16^{k} (2k+1) }{2} \lrb{ 
        \frac{10k+5}{192}  - \frac{17}{12} \lrp{\frac{17}{2} \theta_4(N) + \frac14 \theta_1(N)}  - \frac{1}{3} \lrp{\frac{17}{2} \theta_4(N) + \frac14 \theta_1(N)}^2
    }. \qquad \label{eqn:T4-comp1}
\end{align}

For the second component of \eqref{eqn:a2_4_diff}, we can similarly compute the following upper and lower bounds on $\Tr T_{16}(N,2k)$; see also \cite[Lemmas 6.1]{ross-xue} and \cite[Lemma 6.2]{ross-xue} for the specific bounding details:
\begin{align}
    \Tr T_{16}(N,2k) &\leq \psi(N)^2 16^k (2k-1) \lrb{\frac{4309}{192} \theta_5(N) }, \\
    \Tr T_{16}(N,2k) &\geq \psi(N)^2 16^{k} (2k-1) \lrb{\frac{-4211}{192} \theta_5(N)}.
\end{align}
These two $\Tr T_{16}$ bounds mean that
\begin{align}
    & \ \ \ -\frac{1}{2}\Tr T_{16}(N, 2k+2) + \frac{1}{2}\Tr T_{16}(N, 2k) \\
    &\geq -\frac{ \psi(N)^2 16^{k+1} (2k+1) }{2} \lrb{\frac{4309}{192} \theta_5(N) } - \frac{ \psi(N)^2 16^{k} (2k-1) }{2} \lrb{\frac{4211}{192} \theta_5(N)} \\
    &\geq \frac{ \psi(N)^2 16^{k} (2k+1) }{2} \lrb{- \frac{73155}{192} \theta_5(N)}. \label{eqn:T4-comp2}
\end{align}

For the third component of \eqref{eqn:a2_4_diff}, we have from \eqref{eqn:temp-tr-t4-bounds} that
\begin{align}
    & \ \ \ - \frac12 2^{2k+1} \Tr T_4(N, 2k+2) + \frac12 2^{2k-1} \Tr T_4(N, 2k)\\
    &= - \frac12 2^{2k+1} \psi(N) 4^{k+1} \lrb{\frac{2k+1}{24} + E_{N,k+1}} + \frac12 2^{2k-1} \psi(N) 4^k \lrb{\frac{2k-1}{24} + E_{N,k}} \\
    &\geq \frac{ \psi(N) 16^k }{2} \lrb{  -8 \frac{2k+1}{24} - 8 \lrabs{E_{N,k+1}} -  \frac12 \lrabs{E_{N,k}} } \\
    &\geq \frac{ \psi(N)^2 16^k (2k+1) }{2} \lrb{  -\frac{1}{3} \theta_5(N) - \frac83 \lrabs{E_{N,k+1}}  \theta_5(N) -  \frac16 \lrabs{E_{N,k}}  \theta_5(N) } \\
    &\geq \frac{ \psi(N)^2 16^k (2k+1) }{2} \lrb{  -\frac{201}{8} \theta_5(N) }. \label{eqn:T4-comp3}
\end{align}
In the last step, we used the fact from \eqref{eqn:temp-tr-t4-bounds} that $\lrabs{E_{N,k}}, \lrabs{E_{N,k+1}} \leq \frac{17}{2} \theta_4(N) + \frac14 \theta_1(N) \leq \frac{35}{4}$.

Finally, for the fourth component of \eqref{eqn:a2_4_diff}, we have by Proposition \ref{prop:dim-formula} that
\begin{align}
    &\ \ \ -\frac12\cdot 4^{2k+1}s(N, 2k+2) + \frac12 \cdot 4^{2k-1}s(N, 2k) \\
    &\geq -\frac12\cdot 4^{2k+1} s(N, 2k+2) \\
    &= -\frac12\cdot  4^{2k+1} \Bigg[ 
    \frac{2k+1}{12} \psi(N) - \frac12  \sum_{d\mid N} \varphi(\gcd(d,N/d)) + \delta_{k+1,1} \\
    & \ \ \ - \varepsilon(N/9)\, c_3(k+1) \prod_{p \mid N} \lrp{1 + \lrp{\frac{-3}{p}} } 
        - \varepsilon(N/4)\, c_4(k+1) \prod_{p \mid N} \lrp{1 + \lrp{\frac{-4}{p}} }
    \Bigg] \\
    &\geq -\frac12\cdot 4^{2k+1} \lrb{ \frac{2k+1}{12} \psi(N) + \frac13 \cdot 2^{\omega(N)} + \frac14 \cdot 2^{\omega(N)} } \\
    &\geq \frac{\psi(N) 16^k}{2} \lrb{ -4 \frac{2k+1}{12} - \frac73  } \\
    &\geq \frac{\psi(N)^2 16^k (2k+1)}{2} \lrb{ -\frac{10}{9} \theta_5(N) }. \label{eqn:T4-comp4}
\end{align}

We can then use these bounds on the four components of \eqref{eqn:a2_4_diff} in order to prove Theorem \ref{theorem:main4}.

{
\renewcommand{\thetheorem}{\ref{theorem:main4}}
\begin{theorem}
    Let $N \ge 1$ odd be fixed, and consider $k \geq 1$ such that $s(N,2k) \ge 2$. Then 
    $a_2(T_4(N, 2k))$
    takes no repeated values as $k$ varies.
\end{theorem}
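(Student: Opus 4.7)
The plan is to follow the same architecture as the proof of Theorem~\ref{theorem:main}, but now establishing a \emph{lower} bound (rather than upper bound) on the consecutive difference $a_2(T_4(N, 2k+2)) - a_2(T_4(N, 2k))$. First, I would add the four component bounds \eqref{eqn:T4-comp1}, \eqref{eqn:T4-comp2}, \eqref{eqn:T4-comp3}, \eqref{eqn:T4-comp4} into \eqref{eqn:a2_4_diff}. Every one of them has already been scaled by the common factor $\tfrac{\psi(N)^2 16^k(2k+1)}{2}$, so the sum takes the clean form
\begin{align}
a_2(T_4(N, 2k+2)) - a_2(T_4(N, 2k)) \;\geq\; \frac{\psi(N)^2 16^k (2k+1)}{2}\left[\frac{10k+5}{192} - E(N)\right],
\end{align}
where $E(N)$ collects the five $\theta_i(N)$ error contributions from the four components (together with the small quadratic term $\frac{1}{3}(\tfrac{17}{2}\theta_4(N)+\tfrac14\theta_1(N))^2$ arising from component one).

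Next, I would produce an explicit numerical bound on $E(N)$ by plugging in $\psi(N)\ge N$ together with the multiplicativity estimates \eqref{eqn:temp-mult-bounds} for $2^{\omega(N)}$ and $\sigma_0(N)$. Since every $\theta_i(N)$ decays like a negative power of $N$, this will yield $E(N) < \tfrac{15}{192}$ (the value of $\tfrac{10k+5}{192}$ at $k=1$) for all $N$ beyond some explicit threshold $N_0$. That conclusion immediately gives strict monotonicity of $a_2(T_4(N,2k))$ in $k$ for every odd $N\ge N_0$, and hence non-repetition there.

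For the finitely many odd $N < N_0$, the key observation is that the leading term $\tfrac{10k+5}{192}$ grows linearly in $k$ while $E(N)$ is fixed, so for each such $N$ there exists a threshold $k_N$ beyond which the bracketed quantity is positive. This proves that for each fixed odd $N < N_0$, the sequence $\{a_2(T_4(N,2k))\}_{k \ge k_N}$ is strictly increasing, so no repetition can occur among those terms. It then remains to verify by computer (as in the proof of Theorem~\ref{theorem:main}) that no value in the initial segment $1 \le k \le k_N$ (with $s(N,2k)\ge 2$) coincides with any other value from either the initial segment or the monotone tail; the tail can be handled by checking that the largest value in $\{1,\dots,k_N\}$ is smaller than $a_2(T_4(N,2(k_N+1)))$, after which strict monotonicity of the tail finishes the job.

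The main obstacle I anticipate is bookkeeping: there are four components, each contributing several $\theta_i(N)$ terms with different signs, so a small arithmetic error in consolidating them into $E(N)$ could push the threshold $N_0$ up substantially or, in the worst case, fail to beat $\tfrac{15}{192}$ at all. Unlike the $T_2$ proof, here the leading constant $\tfrac{10k+5}{192}$ is quite small at $k=1$, so the margin against $E(N)$ is tight and the bound \eqref{eqn:temp-mult-bounds} must be applied carefully. A secondary subtlety is that the proof only yields \emph{eventual} monotonicity (not global monotonicity, as in Theorem~\ref{theorem:main}), so the final computer verification must check genuine non-repetition across the initial segment, not merely local behavior.
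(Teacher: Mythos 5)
Your proposal is correct and follows essentially the same route as the paper: summing the four component bounds to get the lower bound $\frac{\psi(N)^2 16^k(2k+1)}{2}\bigl[\frac{10k+5}{192} - E(N)\bigr]$, bounding $E(N) < \frac{15}{192}$ for $N$ past an explicit threshold, and handling the remaining levels via a weight threshold $k_N$ plus computer verification. You also correctly anticipate the one genuine subtlety the paper flags --- that monotonicity is only eventual here, so the initial segment must be checked for non-repetition outright and compared against the start of the monotone tail.
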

\addtocounter{theorem}{-1}
}

\begin{proof}
Applying the bounds \eqref{eqn:T4-comp1}, \eqref{eqn:T4-comp2}, \eqref{eqn:T4-comp3}, and \eqref{eqn:T4-comp4} to \eqref{eqn:a2_4_diff}, we obtain
\begin{align}
    &\ \ \ a_2(T_4(N, 2k + 2)) - a_2(T_4(N, 2k)) \\
    &\geq \frac{\psi(N)^2 16^k (2k+1)}{2} \Bigg[
        \frac{10k+5}{192} - \frac{17}{12} \lrp{\frac{17}{2} \theta_4(N) + \frac14 \theta_1(N)}  - \frac{1}{3} \lrp{\frac{17}{2} \theta_4(N) + \frac14 \theta_1(N)}^2 \\
        &\qquad\qquad\qquad\qquad\qquad\qquad\qquad
        - \frac{73155}{192} \theta_5(N) - \frac{201}{8} \theta_5(N) - \frac{10}{9} \theta_5(N)
        \Bigg] \\
    &=  \frac{\psi(N)^2 16^k (2k+1)}{2} \lrb{ \frac{10k+5}{192} - E(N) },  \label{eqn:temp-T4-a2-diff-EN}
\end{align}
where
\begin{align}
    E(N) &=  \frac{17}{12} \lrp{\frac{17}{2} \theta_4(N) + \frac14 \theta_1(N)}  + \frac{1}{3} \lrp{\frac{17}{2} \theta_4(N) + \frac14 \theta_1(N)}^2 + \frac{234577}{576} \theta_5(N).
\end{align}

Now, from \eqref{eqn:temp-mult-bounds}, we have the explicit numerical bounds
\begin{align}
    \frac{17}{2} \theta_4(N) + \frac14 \theta_1(N) &\leq \frac{17}{2} \cdot 4.862 \cdot N^{-3/4} + \frac{1}{4} \cdot 4.862 \cdot N^{-1/4} \\
    &= 41.327 \cdot N^{-3/4} + 1.2155 \cdot N^{-1/4}, \\
     \qquad \theta_5(N) &\leq N^{-1},
\end{align}
which then yield
\begin{align}
    E(N) &\leq \frac{17}{12} \lrp{41.327 \cdot N^{-3/4} + 1.2155 \cdot N^{-1/4}} \\
    & \ \ \ + \frac13 \lrp{41.327 \cdot N^{-3/4} + 1.2155 \cdot N^{-1/4}}^2 + \frac{234577}{576} N^{-1} \\
    &< \frac{15}{192} \qquad \text{for } N \geq 332,\!427.
\end{align}
This means that by \eqref{eqn:temp-T4-a2-diff-EN}, we have $a_2(T_4(N, 2k + 2)) - a_2(T_4(N, 2k)) > 0$ for all $N \geq 332,\!427$ (independently of $k$). Thus for each odd  $N \geq 332,\!427$, $a_2(T_4(N, 2k))$ is a strictly increasing function of $k$. Then for each of the remaining fixed odd values of $N < 332,\!427$, we will have $\frac{10k+5}{192} > E(N)$ (and hence $a_2(T_4(N, 2k + 2)) - a_2(T_4(N, 2k)) > 0$) for sufficiently large $k$, say $k \geq k_N$. 
This verifies that $a_2(T_2(N, 2k))$ is strictly increasing for $k \geq k_N$.
Then considering $1 \leq k \leq k_N$ such that $s(N,2k) \ge 2$, we verify by computer that each $a_2(T_4(N, 2k)) < a_2(T_4(N, 2k_N))$ and that $a_2(T_4(N, 2k))$ takes no repeated values. See \cite{ross-code} for the code. We note here that in contrast to the behavior of $a_2(T_2(N, 2k))$, the values of $a_2(T_4(N, 2k))$ are not necessarily strictly increasing for small $k$. However, they are still non-repeating.

This shows that for any fixed odd $N$, $a_2(T_4(N,2k))$ takes no repeated values, completing the proof.
\end{proof}

We note that the strategies employed in the proofs of Theorems \ref{theorem:main} and \ref{theorem:main4} apply equally well to any $m \ge 2$.
Based on numerical computation \cite{ross-code} and this general strategy, we conjecture the non-repetition of $a_2(T_m(N,2k))$ for every $m \ge 2$.  
\begin{conjecture} \label{conj:nonrepetition}
Let $m \geq 2$ and $N \ge 1$ coprime to $m$ be fixed, and consider $k \ge 1$ such that $s(N,2k) \ge 2$. Then $a_2(T_m(N,2k))$ takes no repeated values as $k$ varies.
\end{conjecture}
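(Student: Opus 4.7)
The plan is to extend the methods of Theorems \ref{theorem:main} and \ref{theorem:main4} to arbitrary fixed $m \ge 2$. Starting from Proposition \ref{prop:a2-formula}, one applies the Eichler--Selberg trace formula (Proposition \ref{eq:eich-selb}) to $\Tr T_m(N,2k)$ and to each $\Tr T_{m^2/d^2}(N,2k)$ appearing in the sum. Since $m^2/d^2$ is always a perfect square for every $d \mid m$, each summand contributes an $A_1$-term $\tfrac{2k-1}{12}\psi(N)(m/d)^{2k-2}$, while $\Tr T_m$ itself contributes an $A_1$-term only when $m$ is a perfect square. Separating these leading contributions from the rest yields a decomposition $2a_2(T_m(N,2k)) = M_m(N,k) + R_m(N,k)$, where, with $\sigma(m) = \sum_{d \mid m} d$,
\begin{align}
M_m(N,k) = \begin{cases}
\dfrac{(2k-1)^2}{144}\psi(N)^2\, m^{2k-2} - \dfrac{(2k-1)\sigma(m)}{12}\psi(N)\, m^{2k-2}, & m \text{ a square},\\[4pt]
-\dfrac{(2k-1)\sigma(m)}{12}\psi(N)\, m^{2k-2}, & m \text{ not a square},
\end{cases}
\end{align}
and $R_m(N,k)$ collects the $A_2, A_3, A_4$ contributions and all cross-products with $A_1$.

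The next step is to analyze $\Delta(k) := 2a_2(T_m(N,2k+2)) - 2a_2(T_m(N,2k))$. In either case, $M_m(N,k)$ has the form $P(k)\, m^{2k-2}$ for a polynomial $P$ of degree $2$ (square case) or $1$ (non-square case) in $k$, and the identity $m^2 P(k+1) - P(k) = (m^2-1)\, c\, k^{\deg P} + O(k^{\deg P - 1})$ (where $c$ is the leading coefficient of $P$) shows that $M_m(N,k+1) - M_m(N,k)$ has order $+k^2\, \psi(N)^2\, m^{2k}$ when $m$ is a square and $-k\, \psi(N)\, m^{2k}$ when $m$ is not, each with explicit constant proportional to $(m^2-1) > 0$. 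For the remainder, one assembles the bounds on $|A_{2,j}|, |A_{3,j}|, |A_{4,j}|$ appearing in \cite[Propositions 4.6, 4.7]{previousPaper} (now with $j \in \{m^2/d^2 : d \mid m\}$), and estimates the cross-products in $(\Tr T_m)^2$ directly, arriving at an inequality of the form
\begin{align}
|R_m(N,k+1) - R_m(N,k)| \,\le\, k\, m^{2k}\, \psi(N)\, E_m(N),
\end{align}
with $E_m(N) \to 0$ as $N \to \infty$, modeled on \eqref{eqn:EN-def}. Combined with the lower bound on $|M_m(N,k+1) - M_m(N,k)|$, this forces $\Delta(k)$ to have a fixed sign for all $N$ beyond an explicit threshold $N_0(m)$, and for each smaller $N$ once $k \ge k_0(m,N)$. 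The finitely many exceptional pairs $(N,k)$ are then verified numerically, exactly as in Sections \ref{sect:T2nonrepetition} and \ref{sect:T4nonrepetition}.

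The main obstacle will be the error analysis in the square case. When $m$ is a square, both $A_{1,m}$ and the class-number term $A_{2,m}$ in $\Tr T_m(N,2k)$ have order $k\, m^{k-1}$, so the cross-term $A_{1,m}\cdot A_{2,m}$ enters $(\Tr T_m)^2$ at the same order in $k$ as the true main contribution $A_{1,m}^2$; the two differ only by a factor of $2^{\omega(N)}/\psi(N)$, which decays in $N$ but not in $k$. Producing explicit constants sharp enough to isolate a finite exceptional set thus requires the delicate bookkeeping pioneered in the proof of Theorem \ref{theorem:main4}, now carried out uniformly over the divisors $d \mid m$. By contrast, the non-square case mirrors Theorem \ref{theorem:main}: $(\Tr T_m)^2$ is already of strictly lower order than the divisor sum, no cancellation arises, and the sign of $\Delta M(k)$ is immediate. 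As in Theorem \ref{theorem:main4}, after establishing large-$k$ monotonicity one must finally confirm by computer that the small-$k$ values do not collide with values from the monotone tail, which is a finite check for each fixed $m$.
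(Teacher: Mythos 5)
The statement you are proving is labelled a \emph{conjecture} in the paper, and the paper does not prove it: immediately after stating it, the authors only sketch the same strategy you describe and are explicit that it yields a verification ``for any given value of $m$,'' not a proof for all $m$. Your proposal inherits exactly the gap that keeps this a conjecture. Every quantitative ingredient in your argument depends on $m$: the error functional $E_m(N)$, the threshold $N_0(m)$ beyond which the sign of $\Delta(k)$ is forced, the weight thresholds $k_0(m,N)$ for each smaller $N$, and, crucially, the terminal ``finite check'' of the exceptional pairs $(N,k)$. Since that finite computation must actually be carried out separately for each of the infinitely many $m \ge 2$, the scheme establishes the conjecture only for whichever specific values of $m$ one chooses to process (as the paper does for $m=2$ and $m=4$ in Theorems \ref{theorem:main} and \ref{theorem:main4}); it does not prove the statement as quantified. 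A genuine proof would need either a uniform-in-$m$ treatment of the error terms and of the small-$(N,k)$ range, or some entirely different idea, and your proposal supplies neither.

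A secondary, more technical gap: even for a single unspecified $m$, you never actually produce the bound $|R_m(N,k+1)-R_m(N,k)| \le k\, m^{2k}\,\psi(N)\, E_m(N)$. The term $A_{2,m^2/d^2}(N,2k)$ involves a sum over all $t$ with $t^2 < 4m^2/d^2$ --- roughly $4m/d$ values of $t$ --- each weighted by a Hurwitz/weighted class number $h_w\!\left(\frac{t^2-4m^2/d^2}{n^2}\right)$, and these class numbers grow with $m$. Making $E_m(N)$ explicit therefore requires class-number estimates and a count of the $t$-terms that your sketch does not address; the constants $\tfrac{61}{3}$, $\tfrac{305}{6}$, etc.\ in Sections \ref{sect:T2nonrepetition} and \ref{sect:T4nonrepetition} are computed by hand for $m=2,4$ and do not transfer. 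Finally, note that in the square case the paper observes that $a_2(T_4(N,2k))$ is \emph{not} monotone for small $k$, so the endgame is not merely ``small-$k$ values do not collide with the monotone tail'' but also that the small-$k$ values do not collide with one another --- again an $m$-dependent computation. Your identification of the main terms (the $\sigma(m)$ divisor sum and, in the square case, the $\frac{(2k-1)^2}{144}\psi(N)^2 m^{2k-2}$ term) is consistent with \eqref{eqn:a2-asymptotics}, so the outline is the right one for verifying individual cases; it just is not, and cannot as written be, a proof of the conjecture.
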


For any given value of $m$, one can verify Conjecture \ref{conj:nonrepetition} using the methods outlined in the proofs of Theorems \ref{theorem:main} and \ref{theorem:main4}. The conjecture will always hold for sufficiently large $k+N$ (see \eqref{eqn:a2-asymptotics} below). Then the finitely many remaining cases can be checked by computer.

We also note a slight difference in these methods depending on whether $m$ is a square or not. The proof of Theorem \ref{theorem:main} showed the monotonic decreasing behavior of $a_2(T_2(N,2k))$, while the proof of Theorem \ref{theorem:main4} showed the (eventual) monotonic increasing behavior of $a_2(T_4(N,2k))$. 
So we can see that $a_2(T_m(N,2k))$ has different asymptotic behavior in these two cases. In particular, by extending the techniques given in the proofs of Theorems \ref{theorem:main} and \ref{theorem:main4}, one can compute (e.g. in \cite[(4.1),(4.2)]{ross-xue}) that for any fixed $m$,
\begin{align} \label{eqn:a2-asymptotics}
    a_2(T_m(N,2k)) = 
    \begin{dcases}
        \frac{\psi(N) m^{2k-2} \sigma_1(m)}{2} \lrb{ 
        -  \frac{2k-1}{12}  
         + O(N^{-1/2+\varepsilon})
        } 
        & \quad \text{if $m$ is a non-square}, \\
        \frac{(2k-1) \psi(N)^2 m^{2k-2}}{2} \lrb{ 
          \frac{2k-1}{144} + O(N^{-1/2+\varepsilon})  
        }
        & \quad \text{if $m$ is a square}.
    \end{dcases}
\end{align}

\section{Non-equality of \texorpdfstring{$a_2(T_2(1,2k))$}{} and \texorpdfstring{$a_2(T_3(1,2k))$}{}}\label{sect:T2T3}

In this section, we prove that for any fixed $k \geq 12$ and $k\ne13$ (that is $s(1,2k)\ge2$), $a_2(T_2(1,2k))\neq a_2(T_3(1,2k))$. Throughout this section, we shall frequently abbreviate $T_m(1, 2k)$ as $T_m$ and $s(1, 2k)$ as $d_{2k}$.


To demonstrate the non-equality of $a_2(T_2)$ and $a_2(T_3)$, we use the following version of the Eichler-Selberg trace formula adapted to level one. The formula here is given by Zagier \cite[Theorem 2]{Eichler-Selber-trace-formula}, although one can check that this follows as a special case of Proposition \ref{eq:eich-selb} for level one:
\begin{equation}
     \Tr T_m=-\frac{1}{2}\sum_{|t|\leq2\sqrt{m}}P_{2k}(t,m)H(4m-t^2)-\frac{1}{2}\sum_{d \mid m}\min(d,m/d)^{2k-1}.\label{LevelOneTrace}
\end{equation}
Here, $H(4m - t^2)$ denotes the Hurwitz class number of discriminant $4m - t^2$. 
Additionally, it is well-known \cite[Theorem 2]{Eichler-Selber-trace-formula} that $P_{2k}(t, m)$ can be written as
\begin{equation} \label{eqn:P2k-formula-rho}
     P_{2k}(t,m)=\frac{\rho^{2k-1}-\overline{\rho}^{2k-1}}{\rho-\overline{\rho}},
\end{equation}
where $\rho+\overline{\rho}=t$ and $\rho\cdot\overline{\rho}=m$. Note also that $P_{2k}(t,m)=P_{2k}(-t,m)$.

For convenience, we introduce here the formula for the dimension $d_{2k}$ of $S_{2k}(\SL_2(\ZZ))$ \cite[p.~88]{diamond2005first}, which is a special case of Proposition \ref{prop:dim-formula} for level one:
\begin{align}\label{eq:dimLevelone}
    d_{2k}=\begin{cases}
        \lfloor \frac{k}{6}\rfloor -1 &k\equiv1\pmod6,\ k > 1,\\ \lfloor \frac{k}{6}\rfloor &{\rm otherwise}.
    \end{cases}
\end{align}
To account for the rounding in this formula, we write
\begin{align}
    d_{2k} = \frac{k}{6} + \delta_k, \label{eq:dim1-correction}
\end{align}
where $-2 \le \delta_k \le 0$.

Using Proposition \ref{prop:a2-formula} to expand $a_2(T_2)$ and $a_2(T_3)$, we have
\begin{align}
    a_2(T_2)=&\frac{1}{2}\Big[(\Tr T_2)^2-\Tr T_4 -  2^{2k-1}d_{2k} \Big], \\
    a_2(T_3)=&\frac{1}{2}\Big[(\Tr T_3)^2-\Tr T_9- 3^{2k-1}d_{2k} \Big].
\end{align}
Then we can then use \eqref{LevelOneTrace} (and the Hurwitz class number values from \cite[A259825]{oeis}) to compute each of the above traces.
We show the work for $\Tr T_3$ in full detail here:
\begin{align}
    \Tr T_3 =& -\frac{1}{2}\sum_{t\leq2\sqrt{3}}P_{2k}(t,3)H(12-t^2)-\frac{1}{2}\sum_{d|3}\min(d,3/d)^{2k-1}\\
    =&-\frac{1}{2}\lrb{P_{2k}(0,3)H(12)+2P_{2k}(1,3)H(11)+2P_{2k}(2,3)H(8)+2P_{2k}(3,3)H(3)}-1\\
    =&-\frac{2}{3}P_{2k}(0,3)-P_{2k}(1,3)-P_{2k}(2,3)-\frac{1}{3}P_{2k}(3,3)-1,\label{TrT3}  \\
    \Tr T_2 =& -\frac{1}{2}P_{2k}(0,2)-P_{2k}(1,2)-\frac{1}{2}P_{2k}(2,2)-1.\label{eq:T2trace}
\end{align}
Next, we have
\begin{align}
    \Tr T_9 =&-\frac{1}{2}\sum_{t\leq6}P_{2k}(t,9)H(36-t^2)-\frac{1}{2}\sum_{d|9}\min(d,9/d)^{2k-1}.\label{TrT9}
\end{align}
The largest $P_{2k}(t, 9)$ terms in the expansion of $\Tr T_9$ \eqref{TrT9} are $P_{2k}(\pm6, 9)$. Using \cite[Lemma 2.3]{previousPaper}, which states that for $t\neq 0$, $P_{2k}(\pm 2t, t^2) = t^{2k-2}(2k - 1)$, we compute $P_{2k}(\pm 6, 9)$ and the Hurwitz class numbers in the above expression, yielding
\begin{align}
    \Tr T_9 =&-\frac{1}{2}\Bigg[ P_{2k}(0,9)H(36)+2P_{2k}(1,9)H(35)+2P_{2k}(2,9)H(32) + 2P_{2k}(3,9)H(27) \\
    &+2P_{2k}(4,9)H(20)+2P_{2k}(5,9)H(11)\Bigg] -(2k-1)\cdot 9^{k-1}H(0) -1-\frac{3^{2k-1}}{2}\\
    =&\frac{2k-1}{12}\cdot 9^{k-1}-\frac{5}{4}P_{2k}(0,9)-2P_{2k}(1,9)-3P_{2k}(2,9)-\frac{4}{3}P_{2k}(3,9)\\
    &-2P_{2k}(4,9)-P_{2k}(5,9)-1-\frac{3^{2k-1}}{2}.
\end{align}
Similarly, we have
\begin{equation}
    \Tr T_4 = \frac{(2k-1)}{12}\cdot 4^{k-1} -\frac{3}{4}P_{2k}(0, 4) -2P_{2k}(1, 4) - \frac{4}{3}P_{2k}(2, 4) - P_{2k}(3, 4) - 2^{2k-1} - 1.\label{eq:TrT4}
\end{equation}
Now, in order to bound each of these traces, we observe the following inequality, which follows immediately from \eqref{eqn:P2k-formula-rho}. For $t^2\neq 4m$,
    \begin{equation}
        |P_{2k}(t, m)| \leq \frac{2m^{k - 1/2}}{\sqrt{|t^2 - 4m|}}\label{eq:P2kbound}.
    \end{equation}
We use this inequality to bound each of the above traces. These next three lemmas all follow a similar procedure.
\begin{lemma}\label{lemma:T3bound}
    We have
    \begin{equation}
        (\Tr T_3)^2 < 22 \cdot 9^{k-1}.
    \end{equation}
\end{lemma}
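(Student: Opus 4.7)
The strategy is direct: I will plug the explicit formula \eqref{TrT3} into the pointwise bound \eqref{eq:P2kbound} term by term, then square. First, every $t$ appearing in \eqref{TrT3} lies in $\{0,1,2,3\}$, so $t^2 \le 9 < 12 = 4m$ and the hypothesis $t^2 \ne 4m$ of \eqref{eq:P2kbound} is satisfied in all four cases. This gives
\begin{equation}
|P_{2k}(t,3)| \le \frac{2\cdot 3^{k-1/2}}{\sqrt{12 - t^2}} \qquad \text{for } t = 0,1,2,3,
\end{equation}
so in particular $|P_{2k}(0,3)| \le 3^{k-1}$, $|P_{2k}(1,3)| \le 2\cdot 3^{k-1/2}/\sqrt{11}$, $|P_{2k}(2,3)| \le 2\cdot 3^{k-1/2}/\sqrt{8}$, and $|P_{2k}(3,3)| \le 2\cdot 3^{k-1/2}/\sqrt{3}$.

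Next, applying the triangle inequality to \eqref{TrT3} and weighting each of these by the coefficients $\tfrac{2}{3},1,1,\tfrac{1}{3}$, respectively, I would collect terms and factor out $3^{k-1}$ (using $3^{k-1/2} = \sqrt{3}\cdot 3^{k-1}$) to obtain
\begin{equation}
|\Tr T_3| \;\le\; \left( \tfrac{4}{3} + \tfrac{2\sqrt{3}}{\sqrt{11}} + \tfrac{\sqrt{3}}{\sqrt{2}} \right) 3^{k-1} + 1 \;=:\; C\cdot 3^{k-1} + 1,
\end{equation}
where a quick numerical check shows $C < 3.603$, hence $C^2 < 12.98$.

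Finally, squaring gives $(\Tr T_3)^2 \le C^2 \cdot 9^{k-1} + 2C\cdot 3^{k-1} + 1$. Since $k \ge 12$ in this section, the cross and constant terms $2C\cdot 3^{k-1} + 1$ are dwarfed by $9^{k-1}$, so I can absorb them into the generous slack between $12.98$ and $22$: explicitly, $2C\cdot 3^{k-1} + 1 \le 9^{k-1}$ for $k \ge 12$ (in fact with vast room), which yields $(\Tr T_3)^2 < 14\cdot 9^{k-1} < 22 \cdot 9^{k-1}$, as desired.

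The main obstacle is purely bookkeeping — tracking the coefficients $\tfrac{2}{3},1,1,\tfrac{1}{3}$ from the Hurwitz class number values through the bounding step. The inequality is intentionally loose (with a margin of roughly $22$ vs.\ $13$) to leave room for similar, slightly larger, bounds on $\Tr T_9$ in the two lemmas that are almost certainly to follow; so this slack is deliberate and the proof will not be delicate.
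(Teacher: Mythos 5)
Your proposal is correct and follows essentially the same route as the paper's proof: apply the bound \eqref{eq:P2kbound} to each $P_{2k}(t,3)$ term of \eqref{TrT3}, use the triangle inequality, factor out $3^{k-1}$, and square, arriving at the same constant $\tfrac{4}{3}+\tfrac{2\sqrt{3}}{\sqrt{11}}+\tfrac{\sqrt{3}}{\sqrt{2}}\approx 3.60$. The only cosmetic difference is that the paper absorbs the $+1$ into the bracket as $3^{-(k-1)}$ rather than handling the cross term separately, so no appeal to $k\ge 12$ is needed there; both versions give well under $22\cdot 9^{k-1}$.
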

\begin{proof}
    We use \eqref{eq:P2kbound} to find a bound on $\Tr T_3$. 
The relevant terms are bounded as follows:
\begin{align}
    \frac{2}{3}\left|P_{2k}(0,3)\right|&\leq \frac{2}{3}\cdot\frac{2 \cdot 3^{k-1/2}}{\sqrt{12}}=2\cdot3^{k-2},\\
    \left|P_{2k}(1,3)\right|&\leq \frac{2\cdot3^{k-1/2}}{\sqrt{11}},\\
    \left|P_{2k}(2,3)\right|&\leq\frac{2\cdot3^{k-1/2}}{\sqrt{8}}=\frac{3^{k-1/2}}{\sqrt{2}},\\
    \frac{1}{3}\left|P_{2k}(3,3)\right|&\leq \frac{1}{3}\cdot\frac{2\cdot3^{k-1/2}}{\sqrt{3}}=2 \cdot 3^{k-2}.
\end{align}
These bounds along with $\eqref{TrT3}$ give
\begin{align}
    |\Tr T_3|&=\left|-\frac{2}{3}P_{2k}(0,3)-P_{2k}(1,3)-P_{2k}(2,3)-\frac{1}{3}P_{2k}(3,3)-1\right|\\
    &\leq2\cdot3^{k-2}+\frac{2\cdot3^{k-1/2}}{\sqrt{11}}+\frac{3^{k-1/2}}{\sqrt{2}}+2\cdot3^{k-2}+1\\
    &=3^{k-1}\left(\frac{4}{3}+\frac{2\sqrt{3}}{\sqrt{11}}+\frac{\sqrt{3}}{\sqrt{2}}+\frac{1}{3^{k-1}}\right),
\end{align}
which means that
\begin{align}
    (\Tr T_3)^2\leq&\left(3^{k-1}\left(\frac{4}{3}+\frac{2\sqrt{3}}{\sqrt{11}}+\frac{\sqrt{3}}{\sqrt{2}}+\frac{1}{3^{k-1}}\right)\right)^2\\
    <& \,22 \cdot 9^{k-1},
\end{align}
as desired.
\end{proof}
\begin{lemma}\label{lemma:TrT9}
    Define $C_k$ such that,
    \begin{equation}
        \Tr T_9 = \frac{2k-1}{12}\cdot 9^{k-1} +C_k.
    \end{equation}
    Then $|C_k|< 15\cdot 9^{k-1}$.
\end{lemma}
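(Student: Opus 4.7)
The plan is to mimic the argument used in Lemma \ref{lemma:T3bound} almost verbatim, just applied to the longer expansion of $\Tr T_9$ given just before the statement of Lemma \ref{lemma:TrT9}. Subtracting off the main term $\frac{2k-1}{12} \cdot 9^{k-1}$, we get
\begin{equation*}
    C_k = -\frac54 P_{2k}(0,9) - 2 P_{2k}(1,9) - 3 P_{2k}(2,9) - \frac43 P_{2k}(3,9) - 2 P_{2k}(4,9) - P_{2k}(5,9) - 1 - \frac{3^{2k-1}}{2},
\end{equation*}
so the task reduces to bounding the absolute value of each of these summands.

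First I would apply the bound \eqref{eq:P2kbound} to each $P_{2k}(t,9)$ with $t \in \{0,1,2,3,4,5\}$, noting that none of these satisfy $t^2 = 36$, so the inequality is available. Using $9^{k-1/2} = 3 \cdot 9^{k-1}$, each bound takes the form $|P_{2k}(t,9)| \le \frac{6 \cdot 9^{k-1}}{\sqrt{36-t^2}}$. Multiplying by the appropriate rational coefficient in front of each term and summing, the total $P_{2k}$-contribution is at most
\begin{equation*}
    9^{k-1}\left( \frac54 + \frac{12}{\sqrt{35}} + \frac{18}{\sqrt{32}} + \frac{8}{\sqrt{27}} + \frac{12}{\sqrt{20}} + \frac{6}{\sqrt{11}}\right).
\end{equation*}

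Next I would handle the two remaining summands: the constant $-1$, which contributes at most $9^{-(k-1)} \cdot 9^{k-1}$, and $-\frac{3^{2k-1}}{2}$, which equals $-\frac{3}{2} \cdot 9^{k-1}$ exactly. Adding these to the $P_{2k}$-contribution yields a bound of the form $9^{k-1}\cdot B_k$, where $B_k$ is a sum of explicit numerical constants plus a term $9^{-(k-1)}$ that is at most $1$ for $k \ge 1$. A direct numerical evaluation (each radical bound evaluated to a couple of decimals) shows $B_k < 15$, as the numerical sum of the six radical terms together with $\tfrac{5}{4}$ and $\tfrac{3}{2}$ is slightly under $14$. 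The only potential obstacle is bookkeeping — ensuring no coefficient is dropped and that the $t = 0$ case uses the $H(36) = 5/6$ value (hence coefficient $\tfrac54$ after halving) rather than a weighted version — but no analytic difficulty arises. Conclude $|C_k| < 15 \cdot 9^{k-1}$, as desired.
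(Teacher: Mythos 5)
Your proposal is correct and follows essentially the same route as the paper: apply \eqref{eq:P2kbound} to each $P_{2k}(t,9)$ with $0 \le t \le 5$, fold in the $-1$ and $-\tfrac{3^{2k-1}}{2}$ terms, and verify numerically that the resulting constant (just under $14$, plus the $9^{-(k-1)} \le 1$ contribution) stays below $15$; your term-by-term bounds agree exactly with the paper's. One small slip in your parenthetical aside: $H(36) = \tfrac52$, not $\tfrac56$ --- halving $\tfrac52$ is what yields the coefficient $\tfrac54$ that you (correctly) use.
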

\begin{proof}
    Recall from \eqref{TrT9} that
    \begin{align}
        \Tr T_9=&\frac{2k-1}{12}\cdot 9^{k-1}-\frac{5}{4}P_{2k}(0,9)-2P_{2k}(1,9)-3P_{2k}(2,9)-\frac{4}{3}P_{2k}(3,9)\\
    &-2P_{2k}(4,9)-P_{2k}(5,9)-1-\frac{3^{2k-1}}{2}.
    \end{align}
    We then use the bound given in \eqref{eq:P2kbound} on each term $P_{2k}(t, 9)$ for which $t \neq \pm 6$:
    \begin{align}
        &\frac{5}{4}|P_{2k}(0,9)|\leq\frac{5\cdot9^{k-1}} {4}, \quad 2|P_{2k}(1,9)|\leq \frac{4\cdot9^{k-1/2}}{\sqrt{35}}, \quad
        3|P_{2k}(2,9)|\leq \frac{9^{k}}{2\sqrt{2}},\\
        &\frac{4}{3}|P_{2k}(3,9)|\leq 8\cdot9^{k-7/4}, \quad
        2|P_{2k}(4,9)|\leq \frac{2\cdot9^{k-1/2}} {\sqrt{5}}, \quad |P_{2k}(5,9)|\leq \frac{2\cdot9^{k-1/2}}{\sqrt{11}}.
    \end{align}
    This yields
    \begin{align}
        |C_k|=&\left|-\frac{5}{4}P_{2k}(0,9)-2P_{2k}(1,9)-3P_{2k}(2,9)-\frac{4}{3}P_{2k}(3,9)-2P_{2k}(4,9)-P_{2k}(5,9)-1-\frac{3^{2k-1}}{2}\right|\\
        \leq&\frac{5\cdot9^{k-1}}{4}+\frac{4\cdot9^{k-1/2}}{\sqrt{35}}+\frac{9^{k}}{2\sqrt{2}}+ 8\cdot9^{k-7/4}+\frac{2\cdot9^{k-1/2}}{\sqrt{5}}+\frac{2\cdot9^{k-1/2}}{\sqrt{11}}+1+\frac{3^{2k-1}}{2}\\
        =&9^{k-1}\left(\frac{5}{4}+\frac{12}{\sqrt{35}}+\frac{9}{2\sqrt{2}}+\frac{8}{9^{3/4}}+\frac{6}{\sqrt{5}}+\frac{6}{\sqrt{11}}+\frac{1}{9^{k-1}}+\frac{3}{2}\right)\\
        <&15\cdot 9^{k-1},
    \end{align}
    as desired.
\end{proof}
\begin{lemma}\label{lemma:TrT4}
    Define $D_k$ such that
    \begin{equation}
        \Tr T_4 = \frac{2k-1}{12}\cdot 4^{k-1} +D_k.
    \end{equation}
    Then $|D_k|< 9\cdot 4^{k-1}$.
\end{lemma}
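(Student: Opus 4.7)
The proof will mirror the strategy already used for Lemmas \ref{lemma:T3bound} and \ref{lemma:TrT9}. Starting from the explicit expression \eqref{eq:TrT4} for $\Tr T_4$, the definition of $D_k$ immediately gives
\begin{align}
    D_k = -\tfrac{3}{4}P_{2k}(0,4) - 2P_{2k}(1,4) - \tfrac{4}{3}P_{2k}(2,4) - P_{2k}(3,4) - 2^{2k-1} - 1.
\end{align}
The main term $\frac{2k-1}{12}\cdot 4^{k-1}$ has already absorbed the contribution of the $t=\pm 4$ terms (where $t^2 = 4m$), so every remaining $P_{2k}(t,4)$ appearing in $D_k$ has $t^2 \ne 4m$, making the bound \eqref{eq:P2kbound} applicable.

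Next, I would apply \eqref{eq:P2kbound} termwise with $m=4$, using $4^{k-1/2} = 2\cdot 4^{k-1}$ to rewrite each estimate with a common factor of $4^{k-1}$. Concretely, $\tfrac34|P_{2k}(0,4)| \le \tfrac34\cdot 4^{k-1}$, $2|P_{2k}(1,4)| \le \tfrac{8}{\sqrt{15}}\cdot 4^{k-1}$, $\tfrac43|P_{2k}(2,4)| \le \tfrac{8}{3\sqrt{3}}\cdot 4^{k-1}$, and $|P_{2k}(3,4)| \le \tfrac{4}{\sqrt{7}}\cdot 4^{k-1}$. The remaining term $2^{2k-1} = 2\cdot 4^{k-1}$ is already in the desired form, and the constant $1$ contributes $\tfrac{1}{4^{k-1}}\cdot 4^{k-1}$.

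Summing these bounds by the triangle inequality gives
\begin{align}
    |D_k| \le 4^{k-1}\left(\tfrac34 + \tfrac{8}{\sqrt{15}} + \tfrac{8}{3\sqrt{3}} + \tfrac{4}{\sqrt{7}} + 2 + \tfrac{1}{4^{k-1}}\right).
\end{align}
The final step is just a numerical verification: the parenthesized quantity evaluates to roughly $0.75 + 2.07 + 1.54 + 1.51 + 2 + 1 < 9$ (and the last summand is at most $1$, attained only at $k=1$, while the lemma is used for $k\ge 12$ where it is negligible). I do not anticipate any genuine obstacle in the argument; the only thing to be careful about is recording the algebraic simplification $4^{k-1/2} = 2\cdot 4^{k-1}$ consistently so that the constants line up to give a clean total below the target $9$.
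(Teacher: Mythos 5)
Your proposal is correct and follows essentially the same route as the paper: both isolate $D_k$ from \eqref{eq:TrT4}, apply \eqref{eq:P2kbound} termwise to the four $P_{2k}(t,4)$ with $t^2\neq 16$, factor out $4^{k-1}$, and arrive at exactly the same constant $\tfrac34+\tfrac{8}{\sqrt{15}}+\tfrac{8}{3\sqrt{3}}+\tfrac{4}{\sqrt{7}}+2+\tfrac{1}{4^{k-1}}<9$. No differences worth noting.
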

\begin{proof}
    Similarly to Lemma \ref{lemma:TrT9}, recall first from \eqref{eq:TrT4} that
    \begin{equation}
        \Tr T_4 = \frac{(2k-1)}{12}\cdot 4^{k-1} -\frac{3}{4}P_{2k}(0, 4) -2P_{2k}(1, 4) - \frac{4}{3}P_{2k}(2, 4) - P_{2k}(3, 4) - 2^{2k-1} - 1.
    \end{equation}
    We then bound these $P_{2k}(t, 4)$ terms using \eqref{eq:P2kbound}:
    \begin{align}
        &\frac{3}{4}|P_{2k}(0, 4)|\leq 3\cdot 4^{k-2}, \qquad2|P_{2k}(1,4)|\leq \frac{4^{k+1/2}}{\sqrt{15}},\\
        &\frac{4}{3}|P_{2k}(2, 4)|\leq \frac{4^{k+1/2}}{3\sqrt{3}}, \qquad 2|P_{2k}(1,4)|\leq \frac{4^{k}}{\sqrt{7}}.
    \end{align}
    By using these bounds,
    \begin{align}
        |D_k|&=\left|-\frac{3}{4}P_{2k}(0, 4) -2P_{2k}(1, 4) - \frac{4}{3}P_{2k}(2, 4) - P_{2k}(3, 4)- 2^{2k-1} - 1\right|\\
        &\leq 3\cdot 4^{k-2}+\frac{4^{k+1/2}}{\sqrt{15}}+\frac{4^{k+1/2}}{3\sqrt{3}}+\frac{4^{k}}{\sqrt{7}}+4^{k-1/2}+1\\
        &= 4^{k-1} \lrp{ \frac34 +\frac{8}{\sqrt{15}}+\frac{8}{3\sqrt{3}}+\frac{4}{\sqrt{7}}+2+\frac{1}{4^{k-1}} }\\
        &< 9\cdot 4^{k-1},
    \end{align}
    as desired.
\end{proof}


We now have the tools to prove Theorem \ref{theorem:T2T3notequal}.

{
\renewcommand{\thetheorem}{\ref{theorem:T2T3notequal}}
\begin{theorem}
    For any $k\geq 12$ and $k\neq 13$,
    \begin{equation}
        a_2(T_3(1,2k))< a_2(T_2(1,2k)).
    \end{equation}
\end{theorem}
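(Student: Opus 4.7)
The plan is to combine the identities from Proposition \ref{prop:a2-formula} to write
\begin{equation*}
2[a_2(T_2) - a_2(T_3)] = (\Tr T_2)^2 - (\Tr T_3)^2 + \Tr T_9 - \Tr T_4 + (3^{2k-1} - 2^{2k-1})\, d_{2k},
\end{equation*}
and then produce an explicit lower bound whose dominant term grows like $k \cdot 9^{k-1}$, thereby dwarfing every competing error contribution once $k$ is sufficiently large. The finitely many remaining cases below the analytic threshold will be verified numerically.

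The execution is straightforward with the machinery already in hand. First I would discard the nonnegative quantity $(\Tr T_2)^2$. Next, I would apply Lemma \ref{lemma:T3bound} to obtain $-(\Tr T_3)^2 \ge -22 \cdot 9^{k-1}$, Lemma \ref{lemma:TrT9} to obtain $\Tr T_9 \ge \frac{2k-1}{12}\, 9^{k-1} - 15 \cdot 9^{k-1}$, and Lemma \ref{lemma:TrT4} to obtain $-\Tr T_4 \ge -\frac{2k-1}{12}\, 4^{k-1} - 9 \cdot 4^{k-1}$. For the dimension piece I would use \eqref{eq:dim1-correction}, i.e.\ $k/6 - 2 \le d_{2k} \le k/6$, applying the lower bound to the positive term $3^{2k-1} d_{2k} = 3 \cdot 9^{k-1} d_{2k}$ and the upper bound to the negative term $-2^{2k-1} d_{2k} \ge -\tfrac{k}{3}\, 4^{k-1}$. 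Collecting everything yields
\begin{equation*}
2[a_2(T_2) - a_2(T_3)] \;\ge\; \frac{9^{k-1}}{12}(8k - 517) \;-\; \frac{4^{k-1}}{12}(6k + 107).
\end{equation*}

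For $k \ge 65$ the coefficient $8k - 517 \ge 3$ is positive, while $(9/4)^{k-1}$ grows exponentially and $(6k+107)/(8k-517)$ stays bounded (tending to $3/4$); so the right-hand side is strictly positive and the desired inequality holds for all $k \ge 65$. The remaining values $12 \le k \le 64$ with $k \ne 13$ are a finite list and can be dispatched by directly computing $a_2(T_2(1,2k))$ and $a_2(T_3(1,2k))$ via the Eichler--Selberg trace formula.

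The main obstacle is the size of the constant $43 = 22 + 15 + 6$ appearing in the coefficient of $9^{k-1}$: since the bound on $(\Tr T_3)^2$ and the estimate on $C_k$ in Lemma \ref{lemma:TrT9} are somewhat crude (and in particular lack the $k$ factor that the dominant $\tfrac{2k-1}{12}\, 9^{k-1}$ term enjoys), the analytic argument only turns on once $k$ is moderately large. A more delicate, term-by-term analysis of $\Tr T_9$ that tracks signs instead of absolute values could in principle lower this threshold, but since only a handful of small cases remain, the numerical verification is the most efficient way to finish.
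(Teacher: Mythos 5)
Your proposal is correct and takes essentially the same route as the paper: the identical decomposition via Proposition~\ref{prop:a2-formula}, the same Lemmas~\ref{lemma:T3bound}, \ref{lemma:TrT9}, \ref{lemma:TrT4} and the dimension correction \eqref{eq:dim1-correction}, followed by a computer check of the remaining small weights. The only (minor) difference is that you combine the two $a_2$ expressions before estimating, so the $4^{k-1}$-weighted terms stay exponentially negligible rather than being absorbed into a constant multiple of $9^{k-1}$, which lowers the analytic threshold from the paper's $k\ge 82$ to your $k\ge 65$.
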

\addtocounter{theorem}{-1}
}

\begin{proof}
    By \eqref{eq:dim1-correction} and Lemmas \ref{lemma:T3bound} and \ref{lemma:TrT9}, we have 
    \begin{align}
        a_2(T_3) &= \frac{1}{2}\left[(\Tr T_3)^2-\Tr T_9 -3^{2k-1}d_{2k}\right] \\
        &\leq \frac{1}{2}\left[
        22 \cdot 9^{k-1}
        - \frac{2k-1}{12} \cdot 9^{k-1} + 15 \cdot 9^{k-1}
        - 3^{2k-1} \cdot \lrp{\frac{k}{6} - 2} \right] \\
        &= \frac{9^{k-1}}{2}\left[
        22
        - \frac{2k-1}{12}  + 15
        -  \frac{k}{2} + 6 \right] \\
        &\leq \frac{9^{k-1}}{2}\left[
        \frac{-2k}{3} + 44 \right].
    \end{align}
    Similarly, by \eqref{eq:dim1-correction} and Lemma \ref{lemma:TrT4}, we have 
    \begin{align}
        -a_2(T_2) &= \frac{1}{2}\left[-(\Tr T_2)^2+\Tr T_4 +2^{2k-1}d_{2k}\right] \\
        &\leq \frac{1}{2}\left[
        \frac{2k-1}{12} \cdot 4^{k-1} + 9 \cdot 4^{k-1}
        + 2^{2k-1} \cdot \frac{k}{6}  \right] \\
        &\leq \frac{1}{2}\left[
        \frac{k}{2} 4^{k-1} + 9 \cdot 4^{k-1}  \right] \\
        &= \frac{9^{k-1}}{2}\left[
        \lrp{\frac{k}{2}+9} \lrp{\frac49}^{k-1}   \right] \\
        &< \frac{9^{k-1}}{2} \cdot 10.
    \end{align}
    Combining these two inequalities, we obtain
    \begin{align}
        a_2(T_3(1,2k)) - a_2(T_2(1,2k)) \leq \lrb{\frac{-2k}{3} + 54} < 0 \qquad \text{for } k \geq 82.
    \end{align}
    Finally, we verify that $a_2(T_3(1,2k))< a_2(T_2(1,2k))$ for $k < 82$ by computer \cite{ross-code}, completing the proof. 
\end{proof}

\section{Non-repetition for fixed weight and varying prime levels} \label{sect:primelevel}

In this section, we show that for any fixed $k\geq58$, $a_2(T_2(p,2k))\neq a_2(T_2(q,2k))$ when $p$ and $q$ are distinct odd prime numbers. To accomplish this we compute refined trace estimates in the case of prime level. The following three lemmas all follow the same pattern. 

\begin{lemma} \label{lem:primedimension}
    Let $p$ be an odd prime or $1$. Then for $k\ge1$
    \begin{align}
        s(p,2k)=\frac{2k-1}{12}\psi(p)+F_{p,k},
    \end{align}
    where $|F_{p,k}|\le\frac{13}{6}$.
\end{lemma}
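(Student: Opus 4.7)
The plan is to invoke Proposition~\ref{prop:dim-formula} directly with $N=p$, isolate the main term $\frac{2k-1}{12}\psi(p)$, lump everything else into $F_{p,k}$, and bound each remaining piece by an elementary triangle-inequality estimate. Explicitly, I would set
\begin{align}
F_{p,k} \,=\, &-\tfrac12\sum_{d\mid p}\varphi(\gcd(d,p/d))+\delta_{k,1} \\
& -\varepsilon(p/9)\,c_3(k)\prod_{q\mid p}\lrp{1+\lrp{\tfrac{-3}{q}}}-\varepsilon(p/4)\,c_4(k)\prod_{q\mid p}\lrp{1+\lrp{\tfrac{-4}{q}}},
\end{align}
so that the identity $s(p,2k)=\frac{2k-1}{12}\psi(p)+F_{p,k}$ is tautological.

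The first step is to evaluate the divisor sum exactly. For $p=1$ it equals $\varphi(1)=1$, while for $p$ an odd prime the only divisors are $1$ and $p$, and $\gcd(1,p)=\gcd(p,1)=1$, so the sum equals $\varphi(1)+\varphi(1)=2$. Combined with $\delta_{k,1}\in\{0,1\}$, I obtain
$\lrabs{-\tfrac12\sum_{d\mid p}\varphi(\gcd(d,p/d))+\delta_{k,1}}\le 1$ in either case (the worst case being $k\ne 1$ and $p$ prime).

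For the remaining two pieces, since $p$ is odd (or $p=1$) we have $\omega(p)\le 1$, so each of the character-sum products is at most $2^{\omega(p)}\le 2$. Using the bounds $\varepsilon(\cdot)\le 1$, $|c_3(k)|\le\tfrac13$, and $|c_4(k)|\le\tfrac14$ recorded in \eqref{eqn:dim-formula-terms-bounds}, the triangle inequality yields
\begin{align}
|F_{p,k}|\;\le\;1+\tfrac13\cdot 2+\tfrac14\cdot 2\;=\;\tfrac{13}{6},
\end{align}
as claimed. There is no real obstacle: the proof is essentially a single triangle-inequality calculation once the divisor sum has been computed in closed form. The only mild care required is to treat the cases $p=1$ (where the empty products equal $1$ and the divisor sum equals $1$) and $p$ prime uniformly so that the constant $13/6$ covers both.
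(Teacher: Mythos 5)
Your proposal is correct and follows essentially the same route as the paper: both isolate the main term $\frac{2k-1}{12}\psi(p)$ from Proposition~\ref{prop:dim-formula}, evaluate the divisor sum as $1$ or $2$, and bound the remaining pieces via the triangle inequality using $|c_3(k)|\le\frac13$, $|c_4(k)|\le\frac14$, and the products being at most $2$, arriving at $1+\frac23+\frac12=\frac{13}{6}$. No gaps.
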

\begin{proof}
    By Proposition \ref{prop:dim-formula} and \eqref{eqn:dim-formula-terms-bounds}, we have
    \begin{align}
        |F_{p,k}| &= \lrabs{s(p,2k) - \frac{2k-1}{12} \psi(p)} \\
        &= \Bigg| \lrp{-\frac12  \sum_{d\mid p} \varphi(\gcd(d,p/d)) + \delta_{k,1}} \\
        & - \varepsilon(p/9)\, c_3(k) \prod_{p' \mid p} \lrp{1 + \lrp{\frac{-3}{p'}} } 
        - \varepsilon(p/4)\, c_4(k) \prod_{p' \mid p} \lrp{1 + \lrp{\frac{-4}{p'}} } \Bigg| \\
        &\leq \lrabs{\lrp{-1 \text{ or } \frac{-1}{2}} + \delta_{k,1}} + \lrabs{c_3(k) \prod_{p' \mid p} \lrp{1 + \lrp{\frac{-3}{p'}} } } + \lrabs{ c_4(k) \prod_{p' \mid p} \lrp{1 + \lrp{\frac{-4}{p'}} } } \\
        &\leq 1 + \frac23 + \frac24 = \frac{13}{6},
    \end{align}
    as desired.
\end{proof}

\begin{lemma} \label{lem:primeT2}
    Let $p$ be an odd prime or $1$. Then for $k\ge10$, 
    \begin{align}
        (\Tr T_2(p, 2k))^2\leq 20.8 \cdot 4^{k-1}. 
    \end{align}
\end{lemma}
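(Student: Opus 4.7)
The plan is to apply the Eichler–Selberg trace formula (Proposition~\ref{eq:eich-selb}) directly to $\Tr T_2(p,2k)$ with $N=p$ and $m=2$, and show that when the level is $1$ or prime each of the four pieces $A_{i,2}(p,2k)$ admits a very sharp bound. Squaring the resulting bound on $\Tr T_2(p,2k)$ will then give the claim. Two of the four pieces vanish trivially: $A_{1,2}(p,2k)=0$ since $2$ is not a perfect square, and $A_{4,2}(p,2k)=0$ since $k\ge 10>1$. So it remains to bound $A_{2,2}(p,2k)$ and $A_{3,2}(p,2k)$.

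For $A_{3,2}(p,2k)$, I would evaluate it by hand. Both divisors $d\in\{1,2\}$ of $2$ give $\min(d,2/d)^{2k-1}=1$, and the divisibility constraint on $\tau$ reduces to $\gcd(\tau,p/\tau)\mid\pm 1$, which is automatic. For $p=1$ only $\tau=1$ appears; for $p$ an odd prime $\tau\in\{1,p\}$, and in each case $\varphi(\gcd(\tau,p/\tau))=1$ and the resulting $y\in\{1,2\}$ is coprime to $p$. A short case analysis then gives $A_{3,2}(1,2k)=-1$ and $A_{3,2}(p,2k)=-2$ for $p$ odd prime, so in particular $|A_{3,2}(p,2k)|\le 2$.

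For $A_{2,2}(p,2k)$, the outer sum runs over $t\in\{0,\pm 1,\pm 2\}$, and for each such $t$ one checks that only $n=1$ survives the conditions $n^2\mid(t^2-8)$ and $(t^2-8)/n^2\equiv 0,1\pmod 4$. The weighted class numbers $h_w(-8)=h_w(-7)=1$ and $h_w(-4)=\tfrac12$ are small fixed constants. The crucial factor $\mu(t,1,2)$ simplifies: since $N_1=\gcd(p,1)=1$, it reduces to the number of $c\in(\mathbb{Z}/p\mathbb{Z})^\times$ solving $c^2-tc+2\equiv 0\pmod p$, which is at most $2$ (and equals $1$ when $p=1$). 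Combined with the bound $|P_{2k}(t,2)|\le \frac{2\cdot 2^{k-1/2}}{\sqrt{8-t^2}}$ from \eqref{eq:P2kbound}, this yields $|A_{2,2}(p,2k)|\le C\cdot 2^{k-1}$ for an explicit constant $C=1+4\sqrt{2/7}+\sqrt{2}\approx 4.553$.

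Combining the two pieces gives $|\Tr T_2(p,2k)|\le C\cdot 2^{k-1}+2$, so squaring yields $(\Tr T_2(p,2k))^2\le C^2\cdot 4^{k-1}+4C\cdot 2^{k-1}+4$. The main obstacle is purely arithmetic: since $C^2\approx 20.72$ is already very close to the target $20.8$, I need the hypothesis $k\ge 10$ precisely so that the cross-term $4C\cdot 2^{k-1}+4$ fits inside the narrow slack $(20.8-C^2)\cdot 4^{k-1}$. Keeping the constants in $C$ tight (by treating the $t=0$, $t=\pm 1$, and $t=\pm 2$ contributions with their actual $h_w$ values, rather than with a crude uniform bound) is what makes the estimate barely work; if needed, one could obtain further slack by using $\mu(t,1,2)\le 1+\bigl(\tfrac{t^2-8}{p}\bigr)$ and averaging, but this should not be necessary for $k\ge 10$.
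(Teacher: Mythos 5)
Your proposal is correct and follows essentially the same route as the paper: both arguments drop $A_{1,2}$ and $A_{4,2}$, bound $|A_{3,2}(p,2k)|\le 2$, and bound $|A_{2,2}(p,2k)|\le\bigl(1+\sqrt{2}+4\sqrt{2/7}\bigr)2^{k-1}$ using $\mu(t,1,2)\le 2$ together with \eqref{eq:P2kbound}, differing only in whether the additive constant is absorbed before or after squaring. The arithmetic in your final step checks out for $k\ge 10$.
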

\begin{proof}
  We follow the proof of \cite[Proposition 4.6]{previousPaper}. Since $p$ is prime,   for each $n\ge1$ such that $n^2\mid t^2-8$ or $n^2\mid t^2-16$, we have the following obvious inequalities on the number of solutions to quadratic congruence modulo $p$:
   \begin{align}
       0\le\mu(t,n,2)\le2, \quad 0\le\mu(t,n,4)\le2.
   \end{align}
   Thus, 
   \begin{align}
       |A_{2,2}|&\le |P_{2k}(2,2)|+2|P_{2k}(1,2)|+|P_{2k}(0,2)|\\
       &\le \frac{2\cdot 2^{k-1/2}}{\sqrt{4}}+2\cdot \frac{2\cdot 2^{k-1/2}}{\sqrt{7}}+\frac{2\cdot 2^{k-1/2}}{\sqrt{8}}.
   \end{align}
   Also,  $|A_{3,2}| \le \sigma_0(p)\le 2$ and $A_{4,2}=0$ since $k\ne1$. Thus we obtain 
   \begin{align}
    |\Tr T_2(N, 2k)|\leq 2^{k-1/2}\left(1+\frac{4}{\sqrt{7}}+\frac{1}{\sqrt{2}}\right)+2 <  4.56 \cdot 2^{k-1} \qquad \text{for } k \ge 10,
   \end{align}
   which yields 
   \begin{align}
       (\Tr T_2(N, 2k))^2 \leq 20.8 \cdot 4^{k-1},
   \end{align}
   as desired.
\end{proof}

\begin{lemma} \label{lem:primeT4}
    Let $p$ be an odd prime or $1$. Then for $k\ge10$,
   \begin{align}
          \Tr T_4(p, 2k) = 4^{k-1} \lrp{ \frac{2k-1}{12}\psi(p) +G_{p,k} }
    \end{align} 
    where $|G_{p,k}|< 13.77$.
\end{lemma}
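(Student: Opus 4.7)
The plan is to apply the Eichler--Selberg trace formula from Proposition~\ref{eq:eich-selb} with $m=4$ and $N=p$ prime (or $N=1$), and show that every term except $A_{1,4}$ contributes only $O(4^{k-1})$ with an explicit constant below $13.77$. Concretely, we have
\[
  \Tr T_4(p,2k) \;=\; A_{1,4}(p,2k) + A_{2,4}(p,2k) + A_{3,4}(p,2k) + A_{4,4}(p,2k),
\]
where $A_{1,4}(p,2k) = \frac{2k-1}{12}\psi(p)\cdot 4^{k-1}$ (since $4$ is a square) and $A_{4,4}(p,2k)=0$ (since $k\ge 10>1$). So $G_{p,k} = (A_{2,4}+A_{3,4})/4^{k-1}$, and the task reduces to bounding $|A_{2,4}|$ and $|A_{3,4}|$ individually.

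For $A_{2,4}$, I would follow exactly the pattern of Lemma~\ref{lem:primeT2}. Primality of $p$ (or $p=1$) forces $0\le \mu(t,n,4)\le 2$ for every admissible pair $(t,n)$, so
\[
  |A_{2,4}(p,2k)| \;\le\; \sum_{|t|\le 3} |P_{2k}(t,4)| \sum_{n}h_w\!\left(\tfrac{t^2-16}{n^2}\right) \;=\; \sum_{|t|\le 3} |P_{2k}(t,4)|\,H(16-t^2),
\]
where the inner sum is the Hurwitz class number. The relevant Hurwitz values are $H(16)=\tfrac32$, $H(15)=2$, $H(12)=\tfrac43$, $H(7)=1$. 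For $t\ne 0$ I apply the estimate $|P_{2k}(t,4)| \le \frac{2\cdot 4^{k-1/2}}{\sqrt{|t^2-16|}}$ from \eqref{eq:P2kbound}; for $t=0$ one has the exact value $|P_{2k}(0,4)|=4^{k-1}$. A short numerical tally gives
\[
  |A_{2,4}(p,2k)| \;\le\; 4^{k-1}\!\left[\tfrac{3}{2} + \tfrac{16}{\sqrt{15}} + \tfrac{16}{3\sqrt{3}} + \tfrac{8}{\sqrt{7}}\right] \;<\; 11.74\cdot 4^{k-1}.
\]

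For $A_{3,4}$, the sum $\sum'_\tau \varphi(\gcd(\tau,p/\tau))\mathbf{1}_p(y)$ is trivial to control when $N=p$ is prime (or $1$): since $\gcd(\tau,p/\tau)=1$ for every divisor $\tau$ of $p$, the inner sum is at most $2$ (respectively $1$). Summing over $d\mid 4$ with weights $\min(d,4/d)^{2k-1}\in\{1,2^{2k-1},1\}$ yields $|A_{3,4}(p,2k)|\le 2 + 2^{2k-1} = 2 + 2\cdot 4^{k-1}$, i.e. $|A_{3,4}|/4^{k-1}\le 2 + 2/4^{k-1}$, which is less than $2.01$ for $k\ge 10$.

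Combining the two bounds gives $|G_{p,k}| \le 11.74 + 2.01 < 13.77$, completing the proof. The main obstacle is simply the tightness of the numerical constant: the coarser bound $|A_{2,4}|\le \tfrac{61}{3}\cdot 2^{\omega(N)}\cdot 4^{k-1}$ recalled from \cite{previousPaper} is too weak, so the improvement comes from two specific sharpenings available in prime level, namely $\mu(t,n,4)\le 2$ and the exact evaluation of the $\sum'_\tau$ piece of $A_{3,4}$.
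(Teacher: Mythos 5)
Your proposal is correct and follows essentially the same route as the paper: isolate $A_{1,4}$ as the main term, note $A_{4,4}=0$, bound $|A_{2,4}|$ using $\mu(t,n,4)\le 2$ (from primality of $p$) together with \eqref{eq:P2kbound} to get about $11.7\cdot 4^{k-1}$, and bound $|A_{3,4}|$ by a constant plus $2^{2k-1}$, giving $<2.01\cdot 4^{k-1}$ for $k\ge 10$. The only cosmetic differences are that you package the $t$-weights as Hurwitz class numbers $H(16-t^2)$ and use the exact value of $P_{2k}(0,4)$, whereas the paper writes the weighted-class-number coefficients directly and lands on $11.76$ instead of your $11.74$.
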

\begin{proof}
 We follow the proof of \cite[Proposition 4.7]{previousPaper}. The main term of  $\Tr T_4(N, 2k) $ is given by
    \begin{align}
        A_{1,4}=\frac{2k-1}{12}\psi(p) 4^{k-1}.
    \end{align}
    Then we also have
    \begin{align}
        |A_{2,4}|&\le 2|P_{2k}(3,4)|+4 |P_{2k}(1,4)|+\left(2+\frac{2}{3}\right)|P_{2k}(2,4)|+\frac{3}{2}|P_{2k}(0,4)|\\
        &\le 2\cdot \frac{2\cdot 4^{k-1/2}}{\sqrt{7}}+4\frac{2\cdot 4^{k-1/2}}{\sqrt{15}}+\frac{8}{3}\cdot \frac{2\cdot 4^{k-1/2}}{\sqrt{12}}+\frac{3}{2}\cdot\frac{2\cdot 4^{k-1/2}}{\sqrt{16}}\\
        &< 11.76 \cdot 4^{k-1}, \\
        |A_{3,4}|&\le 2\sigma_0(p)+2^{2k-1} \\
        &\le4+2^{2k-1} \\
        &< 2.01 \cdot 4^{k-1}  \text{ for } k\ge 10, \\
        A_{4,4} &= 0 ,
    \end{align}
    which immediately yields the desired result.
\end{proof}

We can now use these three lemmas to prove  Theorem \ref{theorem:primelevel}.

{
\renewcommand{\thetheorem}{\ref{theorem:primelevel}}
\begin{theorem} 
 If $k\ge 58$, then
    \begin{equation}
      a_{2}(T_2(p,2k))> a_{2}(T_2(q,2k))
    \end{equation}
 for all odd primes $p< q$, where $p$ can be taken to be $1$.
\end{theorem}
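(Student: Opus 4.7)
The plan is to reduce the question to a direct comparison after plugging in the three lemmas just proven. By Proposition \ref{prop:a2-formula} applied to $m=2$ and $N=p$ (which is coprime to $2$), we have
\begin{align}
    2\,a_2(T_2(p,2k)) = (\Tr T_2(p,2k))^2 - \Tr T_4(p,2k) - 2^{2k-1}\, s(p,2k).
\end{align}
Substituting Lemmas \ref{lem:primedimension} and \ref{lem:primeT4}, and using $2^{2k-1}=2\cdot 4^{k-1}$, the main terms coming from $\Tr T_4$ and $s(p,2k)$ combine to yield
\begin{align}
    2\,a_2(T_2(p,2k)) = (\Tr T_2(p,2k))^2 - 4^{k-1}\lrb{\frac{2k-1}{4}\psi(p) + G_{p,k} + 2F_{p,k}}.
\end{align}

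The next step is to form the difference for two odd primes (or $1$) $p<q$, giving
\begin{align}
    2\bigl[a_2(T_2(p,2k)) - a_2(T_2(q,2k))\bigr] = \underbrace{\frac{4^{k-1}(2k-1)}{4}\bigl(\psi(q)-\psi(p)\bigr)}_{\text{main term}} + \underbrace{\mathcal{E}}_{\text{error}},
\end{align}
where
\begin{align}
    \mathcal{E} = (\Tr T_2(p,2k))^2 - (\Tr T_2(q,2k))^2 - 4^{k-1}(G_{p,k}-G_{q,k}) - 2\cdot 4^{k-1}(F_{p,k}-F_{q,k}).
\end{align}
Since $\psi(1)=1$ and $\psi(\ell)=\ell+1$ for any prime $\ell$, a case check shows $\psi(q)-\psi(p)\geq 2$ in all admissible situations (the minimum occurring at consecutive odd primes). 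Hence the main term is at least $\frac{4^{k-1}(2k-1)}{2}$.

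The last step is to bound $|\mathcal{E}|$. Applying Lemma \ref{lem:primeT2} gives $|(\Tr T_2(p,2k))^2 - (\Tr T_2(q,2k))^2| \leq 20.8\cdot 4^{k-1}$, while Lemmas \ref{lem:primedimension} and \ref{lem:primeT4} give $|G_{p,k}-G_{q,k}|<27.54$ and $2|F_{p,k}-F_{q,k}|\leq 26/3$. Summing yields $|\mathcal{E}| \leq 57.01 \cdot 4^{k-1}$. Therefore it suffices to have $\frac{2k-1}{2}>57.01$, which holds precisely for $k\geq 58$. The only delicate point, and the place where numerical slack matters, is lining up the constants from the three lemmas against the factor $\frac{2k-1}{2}$; everything else is arithmetic bookkeeping. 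This completes the argument.
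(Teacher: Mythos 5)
Your proposal is correct and follows essentially the same route as the paper: expand $a_2$ via Proposition \ref{prop:a2-formula}, substitute Lemmas \ref{lem:primedimension}--\ref{lem:primeT4}, isolate the main term $\frac{4^{k-1}(2k-1)}{4}(\psi(q)-\psi(p)) \ge \frac{4^{k-1}(2k-1)}{2}$, and beat it against the error constant $20.8 + 27.54 + \frac{26}{3} \approx 57.01$, which forces $k \ge 58$. The only cosmetic difference is that you bound $\lrabs{(\Tr T_2(p,2k))^2 - (\Tr T_2(q,2k))^2}$ symmetrically by $20.8\cdot 4^{k-1}$, whereas the paper simply discards the nonnegative $(\Tr T_2(p,2k))^2$ term; both give the same constant.
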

\addtocounter{theorem}{-1}
}

\begin{proof}

 By Proposition \ref{prop:a2-formula} and Lemmas \ref{lem:primedimension}-\ref{lem:primeT4}
    \begin{align}
    a_{2}(T_2(p,2k))
    &=\frac{1}{2} \Big[(\Tr T_2(p, 2k))^2-\Tr T_4(p, 2k) - 2^{2k-1}s(p, 2k) \Big] \\
    &= \frac{1}{2} \lrb{ \Tr T_2(p, 2k))^2 - 
    4^{k-1} \lrp{ \frac{2k-1}{12}\psi(p) + G_{p,k}} -  2^{2k-1} \lrp{\frac{2k-1}{12} \psi(p) + F_{p,k} }  } \\
    &= \frac{4^{k-1}}{2} \lrb{ \frac{\Tr T_2(p, 2k))^2}{4^{k-1}} - 
    \lrp{ \frac{2k-1}{12}\psi(p) + G_{p,k}} -  2 \lrp{\frac{2k-1}{12} \psi(p) + F_{p,k} }  } \\
    &= \frac{4^{k-1}}{2} \lrb{ - 
     \frac{2k-1}{4}\psi(p) - G_{p,k} -  2  F_{p,k} +  \frac{\Tr T_2(p, 2k))^2}{4^{k-1}}   }. 
    \end{align}
    Then using Lemmas \ref{lem:primedimension}-\ref{lem:primeT4} and the bound $\psi(q)-\psi(p)\ge q-p\ge2$, we have
    \begin{align}
         & \ \ \ a_{2}(T_2(p,2k))-a_{2}(T_2(q,2k))\\
         &= \frac{4^{k-1}}{2} \lrb{
            \frac{2k-1}{4}\lrp{\psi(q)-\psi(p)} + G_{q,k}-G_{p,k} +  2 F_{q,k} - 2 F_{p,k} + \frac{\Tr T_2(p, 2k))^2 - \Tr T_2(q, 2k))^2}{4^{k-1}} 
        } \\
        &\geq \frac{4^{k-1}}{2} \lrb{
            \frac{2k-1}{4} \cdot 2 + G_{q,k}-G_{p,k} +  2 F_{q,k} - 2 F_{p,k} -\frac{ \Tr T_2(q, 2k))^2}{4^{k-1}} 
        } \\
        &\geq \frac{4^{k-1}}{2} \lrb{
            \frac{2k-1}{2} - 2\cdot 13.77 - 4 \cdot \frac{13}{6} - 20.8
        },
    \end{align}
    which is greater than $0$ for $k\ge 58$. 
\end{proof}

\section{Distinguishing Hecke eigenforms} \label{sect:distinguish}

Recall that Maeda's conjecture states that the characteristic polynomials of the Hecke operators $T_m(1,2k)$ are irreducible for all $m \ge 2$, $k \ge 1$.
In this section, we use the non-repetition results proven in Theorems \ref{theorem:main} and \ref{theorem:main4} to extend the main result of \cite{Vilardi2018}.
Theorem 1.2 of \cite{Vilardi2018} showed that, assuming Maeda's conjecture, a normalized Hecke eigenform of level one is determined by its $2$nd Fourier coefficient.
A natural question one might then ask is if the same result holds for the other Fourier coefficients. 
In fact, this was recently conjectured by Xue and Zhu in \cite{xue-zhu}.
\begin{conjecture}[{\cite[Conjecture 3.5]{xue-zhu}}] \label{conj:distinguish}
    For any fixed $m \geq 2$, a normalized Hecke eigenform of level one is determined by its $m$-th Fourier coefficient.
\end{conjecture}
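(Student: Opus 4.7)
The forward direction $f = g \Rightarrow c_f(m) = c_g(m)$ is trivial, so the plan is to tackle the converse. Suppose $c_f(m) = c_g(m) =: \alpha$. My strategy is to first show that the characteristic polynomials of $T_m(1,2k_1)$ and $T_m(1,2k_2)$ must be equal, then use the non-repetition theorems to conclude $k_1 = k_2$, and finally invoke irreducibility to conclude $f = g$.

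Since $f$ and $g$ are normalized Hecke eigenforms, $\alpha$ is an eigenvalue of both $T_m(1, 2k_1)$ and $T_m(1, 2k_2)$, and hence a root of both characteristic polynomials. By hypothesis, both polynomials are irreducible over $\mathbb{Q}$, so each is the minimal polynomial of $\alpha$ over $\mathbb{Q}$; uniqueness of the minimal polynomial then forces them to coincide. In particular their degrees agree, so $s(1, 2k_1) = s(1, 2k_2)$, and whenever this common dimension is at least $2$, their second coefficients agree as well.

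Now I would split into cases based on the common dimension $d := s(1, 2k_1) = s(1, 2k_2)$. If $d \ge 2$, then $a_2(T_m(1, 2k_1)) = a_2(T_m(1, 2k_2))$, and Theorem \ref{theorem:main} (for $m = 2$) or Theorem \ref{theorem:main4} (for $m = 4$), applied with $N = 1$, immediately yields $k_1 = k_2$. The case $d = 0$ is vacuous (no eigenforms exist), and $d = 1$ occurs only for the finitely many weights $2k \in \{12, 16, 18, 20, 22, 26\}$; in these cases the characteristic polynomial is the linear polynomial $x - \Tr T_m(1, 2k)$, and a direct computation will confirm that $\Tr T_m(1, 2k)$ takes six distinct values on this finite set for each of $m = 2$ and $m = 4$, again giving $k_1 = k_2$.

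Finally, once $k_1 = k_2 =: k$ is established, the irreducibility of the characteristic polynomial of $T_m(1, 2k)$ guarantees that its roots form a Galois orbit of distinct algebraic numbers. Hence $\alpha$ is a simple eigenvalue and determines a unique normalized Hecke eigenform in $S_{2k}(\SL_2(\ZZ))$, forcing $f = g$. The only substantive input to this argument is the non-repetition of $a_2$ supplied by Theorems \ref{theorem:main} and \ref{theorem:main4}; the remainder is essentially bookkeeping. The same template would upgrade the statement to any fixed $m \ge 2$, conditional on Conjecture \ref{conj:nonrepetition} plus the analogous finite-weight check, so the main obstacle to the full Conjecture \ref{conj:distinguish} is really the non-repetition conjecture itself.
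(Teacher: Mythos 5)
Your proposal is correct and follows essentially the same route as the paper's proof of Theorem \ref{theorem:distinguish-eigenforms}: irreducibility forces the two characteristic polynomials (equivalently, the Galois orbits of the common eigenvalue) to coincide, non-repetition of $a_2$ from Theorems \ref{theorem:main} and \ref{theorem:main4} handles dimension at least $2$, the six weights with $s(1,2k)=1$ are checked directly, and simplicity of the eigenvalues then gives $f=g$. Your closing remark also matches the paper's discussion that the general case of Conjecture \ref{conj:distinguish} reduces (under Maeda) to Conjecture \ref{conj:nonrepetition} plus a finite computation.
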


The papers \cite{Vilardi2018} and \cite{xue-zhu} used the non-repetition of $\Tr T_m(1, 2k)$ to prove the $m=2$ and $m=3$ cases of Conjecture \ref{conj:distinguish}, respectively (conditional on Maeda's conjecture). 
We present a slightly different approach using the non-repetition of $a_2(T_m(1,2k))$; assuming Maeda's conjecture, we use Theorems \ref{theorem:main} and \ref{theorem:main4} to verify Conjecture \ref{conj:distinguish} for $m=2,4$.

{
\renewcommand{\thetheorem}{\ref{theorem:distinguish-eigenforms}}
\begin{theorem} 
    Let $m = 2 \text{ or }4$ be fixed, and let $f=\sum_{n\ge1} c_f(n) q^n\in S_{2k_1}(\SL_2(\ZZ))$ and $g=\sum_{n\ge1} c_g(n) q^n\in S_{2k_2}(\SL_2(\ZZ))$ be normalized Hecke eigenforms. Assume that the characteristic polynomials of the $T_m(1,2k_i)$ are irreducible. Then
    \begin{equation}
    f=g \quad \text{iff} \quad c_f(m) = c_g(m).
    \end{equation}
\end{theorem}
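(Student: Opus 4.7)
My plan is to establish the theorem in two stages: first reduce to showing $k_1 = k_2$, and then complete the argument in the common space.

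The forward direction is immediate, since $c_f(m)$ is by definition the $T_m(1,2k_1)$-eigenvalue of the normalized eigenform $f$. For the reverse direction, assume $c_f(m) = c_g(m)$. This common algebraic number is a root of the characteristic polynomial of $T_m(1,2k_1)$ and also of $T_m(1,2k_2)$. By hypothesis, both polynomials are monic and irreducible over $\BQ$, and they share this root; hence each is the minimal polynomial over $\BQ$ of that number, and the two polynomials must coincide. In particular, they have the same degree, so $s(1,2k_1) = s(1,2k_2)$, and their $a_2$ coefficients agree (whenever the degree is at least $2$), so $a_2(T_m(1,2k_1)) = a_2(T_m(1,2k_2))$.

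I would then split on the common dimension $d = s(1,2k_1) = s(1,2k_2)$. If $d \ge 2$, I would apply Theorem \ref{theorem:main} (for $m = 2$) or Theorem \ref{theorem:main4} (for $m = 4$) with $N = 1$, which asserts non-repetition of $a_2(T_m(1,2k))$ in $k$, to deduce $k_1 = k_2$ immediately. If $d = 1$, then $k_1$ and $k_2$ both lie in the finite exceptional set $\{6,8,9,10,11,13\}$ of weights with one-dimensional cusp form space; a direct computer verification over these six weights confirms that the $m$-th Fourier coefficients of the unique normalized eigenforms are pairwise distinct for both $m = 2$ and $m = 4$, again forcing $k_1 = k_2$. (The irreducibility hypothesis is automatic in the $d = 1$ case, so nothing extra is needed.)

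Finally, once $k_1 = k_2$, both $f$ and $g$ live in the same space $S_{2k_1}(\SL_2(\ZZ))$. The irreducibility of the characteristic polynomial of $T_m(1,2k_1)$, combined with the diagonalizability of Hecke operators with respect to the Petersson inner product, ensures that $T_m(1,2k_1)$ has exactly $s(1,2k_1)$ simple eigenvalues (a single Galois orbit), each with a one-dimensional eigenspace spanned by a distinct normalized Hecke eigenform. Hence the equality $c_f(m) = c_g(m)$ of the corresponding $T_m$-eigenvalues forces $f = g$. The main substance of the argument is the non-repetition results already furnished by Theorems \ref{theorem:main} and \ref{theorem:main4}; the remaining steps are a routine minimal-polynomial packaging plus the finite computer check in the $d = 1$ case, so no new analytic obstacle arises.
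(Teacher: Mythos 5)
Your proposal is correct and follows essentially the same route as the paper: both arguments use irreducibility to identify the full eigenvalue set (equivalently, the characteristic polynomial) from the single shared eigenvalue, deduce $k_1=k_2$ from the non-repetition of $a_2$ in Theorems \ref{theorem:main} and \ref{theorem:main4} when the dimension is at least $2$ together with a finite computer check for the one-dimensional weights $2k\in\{12,16,18,20,22,26\}$, and finish using simplicity of the eigenvalues. The only difference is cosmetic packaging (minimal polynomials versus Galois orbits of eigenvalues).
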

\addtocounter{theorem}{-1}
}

\begin{proof}
    For weights $2k$ with $s(1,2k) \neq 0$, let $\Egv_{2k}$ denote the set of eigenvalues of $T_m(1,2k)$ (counting multiplicity). We observe that these $\Egv_{2k}$ are all distinct.
    For $\Egv_{2k}$ of size at least $2$, this observation immediately follows from Theorems \ref{theorem:main} and \ref{theorem:main4}.
    The $\Egv_{2k}$ of size $1$ come from $2k = 12,16,18,20,22,26$. And we can verify directly that each of these $\Egv_{2k}$ is distinct \cite{ross-code}.
    
    We now show the desired result. The forward direction is immediate.
    Then suppose that $c_f(m) = c_g(m)$, and we will show that $f=g$. Note that the irreducibility of characteristic polynomials means that the $\Egv_{2k_i}$ will be precisely the Galois orbit of any one eigenvalue. Thus $c_f(m) = c_g(m)$ means that $\Egv_{2k_1} = \Egv_{2k_2}$. Then by the above observation, this means that in particular, we must have $k_1 = k_2$. 
    Thus since $k_1=k_2$, $c_f(m)$ and $c_g(m)$ are two eigenvalues of $T_m(1,2k_1)$ associated to $f$ and $g$. But since the characteristic polynomial of $T_m(1,2k_1)$ has no repeated roots, we must have $f=g$. This completes the proof.
\end{proof}

We note that our approach is easy to generalize to any given $m \geq 2$. For any given $m \ge 2$, our methods prove the non-repetition of $a_2(T_m(1,2k))$ for sufficiently large $k$ (see the discussion below Conjecture \ref{conj:nonrepetition}). Assuming Maeda's conjecture, this verifies Conjecture \ref{conj:distinguish} for eigenforms of sufficiently large weight. Then the finitely many remaining cases can be checked by computer.
This is an advantage over the approaches of \cite{Vilardi2018} and \cite{xue-zhu}, which do not easily generalize to larger values of $m$. In particular, we are not aware of any general strategy to show the non-repetition of $\Tr T_m(1,2k)$.

In fact, this is an instance of a general phenomenon: The second coefficient $a_2(T_m(N,2k))$ appears to be an easier object to study than the trace 
$\Tr T_m(N,2k)$ (in part, due to the fact that $a_2(T_m(N,2k))$ has the asymptotic behavior described in \eqref{eqn:a2-asymptotics}, while $\Tr T_m(N,2k)$ does not seem to follow any sort of asymptotic behavior).
For example, it is known that $a_2(T_m(N,2k))$ is nonvanishing for sufficiently large $k$ \cite[Theorem 1.2]{ross-xue}. However, no such result exists so far for $\Tr T_m(N,2k)$ (even in level one).
For another example, the non-repetition of $a_2(T_2(N,2k))$ shown in Theorem \ref{theorem:main} gives much stronger results than the non-repetition of $\Tr T_2(1,2k)$ shown in \cite[Theorem 1]{chirjorznorepeat}. First, Theorem \ref{theorem:main} proves the monotonicity of $a_2(T_2(N,2k))$ in $k$, while the trace $\Tr T_2(1,2k)$ is by no means monotonic. Second, Theorem \ref{theorem:main} addresses every odd level, instead of confining to level one (and the method of \cite[Theorem 1]{chirjorznorepeat} does not easily extend to higher levels).

We also note that the other coefficients of Hecke polynomials could similarly be used here. Based on the results of this work, one should be able to show the non-repetition of the other even-indexed coefficients $a_{2j}(T_m)$.
In particular, one could apply the methods of this paper to obtain explicit bounds for the asymptotic behavior of $a_{2j}(T_m)$ proven in \cite[Theorems 1.1, 1.2]{ross-xue-rth-coeff}, then use these explicit bounds to prove the non-repetition of $a_{2j}(T_m)$.

\section*{Acknowledgements}
This research was supported by NSA MSP grant H98230-23-1-0020.

\bibliographystyle{plain}
 


\providecommand{\bysame}{\leavevmode\hbox
to3em{\hrulefill}\thinspace}

\bibliography{bibliography.bib}

\end{document}